\documentclass[11pt]{amsart}
\topmargin0in
\textheight7.5in
\oddsidemargin0.2in
\evensidemargin0.2in
\textwidth6in
\advance\hoffset by -0.5 truecm
\usepackage{amssymb}
\usepackage{amsmath}
\usepackage{graphicx}
\usepackage{color}

\newtheorem{Theorem}{Theorem}[section]
\newtheorem{Lemma}[Theorem]{Lemma}
\newtheorem{Corollary}[Theorem]{Corollary}

\newtheorem{Proposition}[Theorem]{Proposition}

\newtheorem{Definition}[Theorem]{Definition}
\newtheorem{Example}[Theorem]{Example}
\newtheorem{Remark}[Theorem]{Remark}

\def \dim{{\mbox {dim}}\,}

\def\R\re
\def \vr{\varphi}
\def \R{{\mathbb R}}
\def \N{{\mathbb N}}

\begin{document}
\title[Solutions]{Isoparametric functions and solutions of Yamabe type equations on manifolds with boundary. }

\author[G. Henry]{Guillermo Henry}\thanks{G. Henry is partially supported by  grant PICT-2016-1302 from ANPCyT and by grant 20020170100330BA from Universidad de Buenos Aires.}
\address{Departamento de Matem\'atica, FCEyN, Universidad de Buenos
	Aires, Ciudad Universitaria, Pab. I., C1428EHA,
	Buenos Aires, Argentina and CONICET, Argentina.}
\email{ghenry@dm.uba.ar}

\author[J. Zuccotti]{Juan Zuccotti}\thanks{J. Zuccotti is supported by a doctoral fellowship of CONICET}
\address{Departamento de Matem\'atica, FCEyN, Universidad de Buenos
	Aires, Ciudad Universitaria, Pab. I., C1428EHA,
	Buenos Aires, Argentina and CONICET, Argentina.}
\email{jzuccotti@dm.uba.ar}

\subjclass{53C21}

\date{}

\maketitle

\begin{abstract}
Let $(M,g)$ be a compact Riemannian manifold with non-empty boundary. Provided $f$ an isoparametric function of $(M,g)$ 
we prove existence results for positive solutions of the Yamabe equation that are constant along the level sets of $f$.
If $(M,g)$ has positive constant scalar curvature, minimal boundary and admits an isoparametric function we also prove multiplicity results for positive solutions of the Yamabe equation on $(M \times N,g+th) $ where $(N,h)$ is any closed Riemannian  manifold with positive constant scalar curvature.
\end{abstract}

\section{Introduction}

One of the classical generalizations of the celebrated  Yamabe Problem, the so called {\it minimal boundary Yamabe Problem}, was proposed by Escobar \cite{Escobar2} in 1992. Given a compact Riemannian manifold $(M^n,g)$ of dimension $n\geq 3$ with non-empty boundary $\partial M$, the minimal boundary Yamabe problem consists in finding a conformally equivalent metric to $g$ with constant scalar curvature and zero mean curvature on the boundary.  This is a equivalent to obtain, for some constant $c$,  a positive smooth solution of the following boundary value problem  
\begin{equation}\label{EYM}\begin{cases}  a_n\Delta_gu+ s_g u=c u^{p_n-1} & \mbox{on } M, \\ \frac{2}{(n-2)}\frac{\partial u}{\partial \eta}+h_{g}u=0, & \mbox{on } \partial M,\end{cases}
\end{equation}
where $a_n=\frac{4(n-1)}{(n-2)}$, $p_n=\frac{2n}{n-2}$,  $\eta$ is the outward unit normal vector field along $\partial M$, $s_g$ is the scalar curvature and $h_g$ is the mean curvature  of $\partial M$. Indeed, if $u>0$ is a smooth solution of Equation \eqref{EYM}  then the Riemannian metric $g_u=u^{p_n-2}g$  satisfies that \[s_{g_u}=c\ \mbox{and}\ h_{g_u}=0.\]
Escobar proved in \cite{Escobar2} that unless  $\dim(M) \geq 6$, $(M,g)$ is non-locally conformally flat,  $\partial M$  is umbilic and  the  Weyl tensor vanishes identically on it, there exists a positive smooth solution to the Equation \eqref{EYM}. The remaining case was settled by Brendle and Chen \cite{Brendle_Chen} provide the positive mass theorem holds. For a non-variational approach see the recent paper by Xu \cite{Xu}.

Positive solutions to Equation \eqref{EYM}  are in one to one correspondence with positive critical points of the functional  
\begin{equation}\label{functYB}
J_g(u):=\frac{\int_Ma_n|\nabla u|^2_g+s_gu^2dv_g+2(n-1)\int_{\partial M}h_gu^2d\sigma}{\Big(\int_Mu^{p_n}dv_g\Big)^{\frac{2}{p_n}}}
\end{equation}
where $dv_g$ and $d\sigma$ are the the volume elements induced by $g$ on $M$ and $\partial M$,  respectively.

In this setting we define the {\it Yamabe constant of} $(M,g)$ as 
\begin{equation*}Y(M,\partial M,[g]):=\inf_{u\in C^{\infty}(M)-\{0\}}J_g(u).
\end{equation*}
Let us denote by $[g]$ the conformal class of $g$, that is the set of Riemannian metrics of the form $\phi g$ with $\phi>0$ smooth. It is well known that  $Y(M,\partial M,h)=Y(M,\partial M,g)$ for any $h\in [g]$, hence the Yamabe constant is  a conformal invariant and we are going to denote it by $Y(M,\partial M,[g])$.

Let $(S^n_+,g^n_0)$ be the $n-$dimensional upper half sphere endowed with the standard metric. Cherrier proved in \cite{Cherrier} that if  
\begin{equation}\label{IneqYamabeCte}Y(M,\partial M,[g])< Y(S^n_+,\partial S^n_+,[g^n_{0}]),
\end{equation}
holds, then $Y(M,\partial M,[g])$ is attained by a positive smooth function.  Escobar in \cite{Escobar2} proved that Inequality \eqref{IneqYamabeCte}  holds in most of the cases. Therefore, there is a positive smooth solution to the Equation \eqref{EYM}  with minimal Yamabe energy.

Given $c_1,\ c_2\in \R$,  a  more general problem is to ask whether is possible to find $\tilde{g}\in [g]$ such that   $s_{\tilde{g}}=c_1$ and $h_{\tilde{g}}=c_2$.
Let $L_g:=a_n\Delta +s_g$ and   $B_g:=\frac{2}{(n-2)}\frac{\partial }{\partial \eta}+h_g$ be the  conformal Laplacian  and the boundary operator respectively. The scalar curvature of $g_u$ and the mean curvature of $\partial M$ with respect to $g_u$  are given by
\begin{equation}\label{curvaturaescalar}s_{g_u}=u^{1-p_n}L_g(u) 
\end{equation}
and
\begin{equation}\label{curvaturamedia}
h_{g_u}=u^{-\frac{p_n}{2}}B_g(u).
\end{equation}
Therefore, the problem of conformally deforming a metric $g$ to one with constant scalar curvature $c_1$ and constant mean curvature equals to $c_2$ on $\partial M$ is equivalent to  find a positive solution of 
\begin{equation}\label{EYG}\begin{cases}  L_g(u)=c_1 u^{p_n-1} & \mbox{on } M, \\ B_g(u)=c_2u^{\frac{p_n}{2}} & \mbox{on } \partial M.\end{cases}
\end{equation}

The minimal boundary Yamabe problem is the special case when $c_1\in \R$ and $c_2=0$.  Another important case if when $c_1=0$ and $c_2\in \R$.  This problem  was addressed by Escobar in \cite{Escobar1} in order to generalized Riemann mapping theorem to higher dimension and it is known as the {\it constant boundary mean curvature Yamabe problem}.  
Let us consider the functional \[Q_{g}(u):=\frac{\int_Ma_n|\nabla u|^2_g+s_gu^2dv_g+2(n-1)\int_{\partial M}h_gu^2d\sigma}{\Big(\int_{\partial M}u^{p_n^{\partial}}d\sigma\Big)^{\frac{2}{p_n^{\partial}}}}\]
where $p_n^{\partial}=\frac{2(n-1)}{(n-2)}$,  and the conformal invariant
\[\tilde{Y}(M,\partial M, [g]):=\inf_{u\in C^{\infty}(M)-\{0\}}Q_g(U).\]

Escobar proved (see \cite{Escobar1} and \cite{Escobar4}), under the assumption  that $\tilde{Y}(M,\partial M, g)$ is finite, that  if either $n\geq 6$ and $\partial M$ has a nonumbilic point or $n\geq 6$, $M$ is locally conformally flat and $\partial M$ is umbilic or $n=4,5$ and $\partial M$ is umbilic or $n=3$, then there is a scalar flat metric in $[g]$ with constant mean curvature on $\partial M$. Actually, the metric he found is of the form $g_u=u^{p_n-2}g$ where $u$ is a minimizer of  $\tilde{Y}(M,\partial M, g)$. The cases that were not considered by Escobar were covered in several articles by Marques \cite{Marques} and \cite{Marques2}, Almaraz \cite{Almaraz} and \cite{Almaraz2}, Chen \cite{Chen}, and Mayer and Ndiaye \cite{Mayer-Ndiaye} using a non-variational approach.

 If $Y(M,\partial M,[g])=0$, then there exists a positive solution of Equation \eqref{EYG} with $c_1=c_2=0$ (see \cite{Escobar1} and \cite{Escobar4}). Han  and  Li  conjectured in \cite{Han-YYli} that if $Y(M,\partial M,[g])>0$, Equation \eqref{EYG} admits a positive smooth solution for any $c_1>0$ and $c_2\in \R$ (or equivalently $c_1=1$ and $c_2\in \R$).  This conjecture was proved for a wide range of manifolds by the combined works of Han and Li \cite{Han-YYli}, \cite{Han-YYli2},  Chen and Sun \cite{Chen-Sun} and Chen, Ruan and Sun  \cite{Chen-Ruan- Sun}. On the other hand, when $Y(M,\partial M,[g])<0$, Chen, Ho and Sun \cite{Chen-Ho-Sun} showed that there exists  a unique positive smooth solution of  Equation \eqref{EYG} for any $c_1<0$ and $c_2<0$.

\vspace{0.5 cm}

Let $(M,g)$ be a closed Riemannian manifold. A non-constant smooth function $f:M\longrightarrow \R$ is called {\it isoparametric} if there exist $a$ and $b$  smooth functions such that  
\begin{equation}\label{Cgradient}\| \nabla f \|^2 = b \circ f
\end{equation}
and
\begin{equation}\label{Claplacian}
\Delta_{g} f = a\circ f.
\end{equation}

The geometric meaning of these conditions is the following. Equation \eqref{Cgradient} implies that the regular level sets, which are called  {\it isoparametric hypersurfaces}, are equidistant to each other, while both Equation \eqref{Cgradient} and Equation \eqref{Claplacian} imply that regular level set are constant mean curvature hypersurfaces. Wang proved in \cite{Wang} that the only critical level sets of $f$ are the maximum $t_+$ and the minimum $t_-$, and their preimages $M_+=f^{-1}(t_+)$ and $M_-=f^{-1}(t_-)$ are submanifolds of $M$ that we called {\it focal submanifolds}. Moreover, he proved that  each isoparametric hypersurface is a tube over either of the focal submanifolds.
This implies that there are topological obstructions to the existence of isoparametric functions. Indeed, a closed Riemannian manifold that admits an isoparametric function must be diffeomorphic to the union of two disc bundles over the focal submanifolds (see Miyaoka \cite{Miyaoka}). On the other hand, by a result of Qian and Tang \cite{Qian-Tang}  we know that for any closed manifold $M$ that admits a Morse-Bott function $f$ with critical level set $M_+\cup M_-$ where $M_-$ and $M_+$ are both closed connected submanifolds of codimensions at least 2, there exists a Riemannian metric $g$ such that $f$ is an isoparametric function of $(M,g)$.

The theory of isoparametric hypersurfaces is very rich. It started, motivated by the modelling of  wavefronts,  in the first decades of the twenty century with the classification of isoparametric hypersurfaces of Euclidean and the Hyperbolic space by Segre \cite{Segre} and Cartan respectively. The classification of isoparametric hypersurfaces on the sphere turned out to be a very hard problem (see \cite{Yau}). It was initiated by Cartan \cite{Cartan2} in 1939 and was completed recently by Chi \cite{Chi1} but a lot of researchers contributed significatively to the solution of this problem. For a survey in the history on the classification of isoparametric hypersurfaces we refer the reader to \cite{Chi2} and the references therein (see also \cite{Cecil}).   

 The isoparametric theory was applied in several contexts. Only to mention some of them,  see Tang and Yan \cite{Tang-Yan2} on the Yau's conjecture on the first eigenvalue of the Laplace-Beltrami operator of a minimal hypersurface in the unit sphere; or  Tang and Yan \cite{Tang-Yan1} on Willmore submanifolds (see also Qian et al. \cite{Qianetal}). 

In the last years they have appeared several articles that make use of  isoparametrics functions to produce both positive and changing sign solutions of the Yamabe equation on closed Riemannian manifolds.  See for instance, Henry and Petean \cite{Henry-Petean}, Henry \cite{Henry}, Fern\'andez and Petean \cite{Fernandez-Petean},  de la Parra et al. \cite{delaParra}, among others. In this article we are going to exploit some of the ideas of mentioned papers and applied  them to the setting of compact manifolds with boundary. 

Let us define isoparametric function in the setting  of compact manifold with non-empty boundary.

\begin{Definition}We say that a non-constant smooth function $f:M\longrightarrow \R$ is an {\it isoparametric function} of $(M,g)$ if it satisfies \eqref{Cgradient},  \eqref{Claplacian} and is locally constant on $\partial M$.   
\end{Definition}

Let $f$ be an isoparametric function of $(M,g)$.  We are going to address the following question: 

\vspace{0.3 cm}
 Does there exist a Riemannian metric $\tilde{g}$  conformal to $g$ such that $f$ is an isoparametric function of $(M,\tilde{g})$ as well and satisfies either $\tilde{g}$ has constant scalar curvature and minimal boundary or $\tilde{g}$ is scalar flat and $\partial M$ is a constant mean curvature hypersurface?

\vspace{0.3 cm}

We will give a positive answer to this question whenever all the connected components of  $M_-$  and $M_+$  have positive dimension.

Let $$k(f):=min \big\{\dim(M_{-}), \dim(M_{+}) \big\}.$$
 By $S_f$ we denote the space of functions that are constant along the level sets of $f$  and by $[g]_f$ the set of  metrics $h\in [g]$  such that $f$ is an isoparametric function  of $(M,h)$ as well.

 Our main result is the following:
 
 \begin{Theorem}\label{A} Let $(M^n,g)$ be a compact manifold  $(n\geq 3)$ with boundary and let $s\geq 1$. Let $f$ be an isoparametric function of $(M,g)$ and  assume that $s_{g}\in S_f$. Let us denote with $(a)$, $(b)$ and $(c)$ the following assumptions:
 	
 	\begin{itemize}
 		\item[(a)] $k(f)\geq n-2$.  
 		\item[(b)] $k(f)<(n-2)$ and $s<\frac{2(n-k(f))}{n-k(f)-2}$.
 		\item[(c)] $k(f)<(n-2)$ and $s<\frac{2(n-k(f)-1)}{n-k(f)-2}$.
 	\end{itemize}
We have,

 	\begin{enumerate}

	\item[i)]  If either $(a)$ or $(b)$ holds, then there exists $($for some $c\in \R$$)$ a positive smooth function $u \in S_f$ that is  a solution of    
	\begin{equation}\label{ThmA}\begin{cases}  a_n\Delta_{g}u+s_{g}u=c u^{s-1} & \mbox{on } M, \\ \frac{2}{n-2}\frac{\partial u}{\partial \eta}+h_{g}u=0 & \mbox{on } \partial M. \end{cases}
\end{equation}

\item[ii)] Let assume that $\tilde{Y˘}(M,\partial M, [g])$ is finite.  If either $(a)$ or $(c)$ is fulfilled, then there exists $($for some $c\in \R$$)$ a positive smooth function $u \in S_f$ such that
\begin{equation}\label{ThmB}\begin{cases}  a_n\Delta_{g}u+s_{g}u=0 & \mbox{on } M, \\ \frac{2}{n-2}\frac{\partial u}{\partial \eta}+h_{g}u=cu^{s-1} & \mbox{on } \partial M. \end{cases}
\end{equation}

\end{enumerate}

 \end{Theorem}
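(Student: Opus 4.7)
The plan is to use symmetry criticality. I would restrict the Yamabe-type functionals to the subspace $S_f\cap H^1(M)$, reduce the resulting variational problem to a one-dimensional weighted problem on $[t_-,t_+]$ via the co-area formula, and solve it by the direct method whenever the exponent $s$ falls in the subcritical range dictated by (a), (b), or (c).

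The first step is a symmetry-criticality lemma. For $u=\phi\circ f\in S_f$, the isoparametric identities \eqref{Cgradient}--\eqref{Claplacian} give
\[
\Delta_g(\phi\circ f)=\phi''(f)\,(b\circ f)+\phi'(f)\,(a\circ f)\in S_f,
\]
so together with $s_g\in S_f$ the conformal Laplacian $L_g$ preserves $S_f$; since $\partial M$ is a finite union of level sets of $f$, the operator $B_g$ preserves the restriction $S_f|_{\partial M}$. I would then argue that any critical point of $J_g$ (resp.\ $Q_g$) restricted to $S_f$ is a critical point on all of $H^1(M)$: the $H^1$-gradient at such a $u$ lies in $S_f$, so vanishing when tested against $S_f$ forces vanishing throughout $H^1$.

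The second step is the co-area reduction. Writing $u=\phi\circ f$ and using $|\nabla f|^2=b\circ f$, the co-area formula gives
\[
\int_M F(u)\,dv_g=\int_{t_-}^{t_+}F(\phi(t))\,\rho(t)\,dt,\qquad \rho(t):=\frac{V(t)}{\sqrt{b(t)}},
\]
with $V(t)=\mathrm{vol}(f^{-1}(t))$, and analogous weighted expressions for the gradient and boundary terms. Both $J_g|_{S_f}$ and $Q_g|_{S_f}$ thus become $1$D weighted functionals whose Euler--Lagrange equations are second-order ODEs in $\phi$. By Wang's structure theorem, near a focal submanifold $M_\pm$ each regular level set is an $(n-\dim M_\pm-1)$-sphere bundle over $M_\pm$, so $\rho(t)\sim c_\pm\,|t-t_\pm|^{\,n-\dim M_\pm-1}$ as $t\to t_\pm$. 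This is the radial density of a standard problem in dimension $n-\dim M_\pm$, so the reduced weighted Sobolev and trace spaces embed as in effective dimension $m:=n-k(f)$. The critical bulk exponent is $2m/(m-2)$ (yielding the range in (b)) and the critical trace exponent is $2(m-1)/(m-2)$ (yielding the range in (c)); when $m\le 2$, i.e.\ under (a), every $s\ge 1$ is subcritical and the embedding is compact unconditionally.

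With compactness of the relevant embedding in hand, the direct method applied to the reduced functional, minimized over nonnegative $\phi$ of fixed reduced $L^s$-norm, produces a nonnegative minimizer. The associated Lagrange multiplier furnishes the constant $c$ of \eqref{ThmA} or \eqref{ThmB}; symmetry criticality lifts $\phi$ to a weak solution $u=\phi\circ f$ of the full boundary value problem; strict positivity follows from a strong maximum principle for the reduced degenerate ODE (equivalently, from the maximum principle and Hopf's lemma applied to the lifted $u$); and standard elliptic regularity yields smoothness. The main technical obstacle I foresee is the careful control of the weight $\rho(t)$ at the endpoints $t_\pm$: both to obtain the sharp compactness statement up to the stated exponents, and to verify that the natural boundary behaviour forced on $\phi$ at $t_\pm$ (where $\rho$ degenerates) correctly matches the smoothness of the lifted function $u$ across the focal submanifolds, so that the weak solution obtained on the interval really does come from a smooth solution on $M$.
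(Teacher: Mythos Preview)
Your strategy is correct but takes a different route from the paper. Rather than reducing to a one-dimensional weighted problem via the co-area formula, the paper proves the improved compactness directly on $M$ (Lemma~\ref{fSobolevembedding}): around each $x\in M$ one takes Fermi coordinates splitting a neighbourhood as $U\times V$ with $U\subset\R^{k(x)}$ tangent to the level set and $V\subset\R^{n-k(x)}_+$ transverse, so that any $u\in S_f$ restricted to the chart depends only on the $V$-variable; the standard Euclidean Sobolev and trace embeddings $H^q_1(V)\hookrightarrow L^s(V)$ and $H^q_1(V)\hookrightarrow L^s(\partial V)$ in dimension $n-k(x)\le n-k(f)$ then produce the ranges in (a)--(c), and a finite cover together with Rellich--Kondrachov gives compactness of $H^2_{1,f}(M)\hookrightarrow L^s(M)$ and of the trace into $L^s(\partial M)$. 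From that point the argument coincides with yours: direct method on a minimising sequence, elliptic regularity, strong maximum principle, and Hopf's lemma. The chart approach buys exactly what you flag as the obstacles---no endpoint asymptotics of $\rho$ are needed, regularity is obtained on $M$ without lifting a weak ODE solution back across the focal sets, and the argument is insensitive to where the components of $\partial M$ sit among the level sets of $f$. One small correction to your sketch: in the $t$-variable the weight does not behave like $|t-t_\pm|^{\,n-\dim M_\pm-1}$; since $b(t)$ vanishes to first order at a focal value, that exponent is correct only after reparametrising by the distance to $M_\pm$, which is the variable in which your radial analogy holds.
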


 	\begin{Remark} The conformal invariant $\tilde{Y}(M,\partial M, [g])$ might be not finite. Let $\lambda_1^D$, $\lambda_1^B(g)$, and $\lambda_1^L(g)$ be the first eigenvalues of the followings eigenvalue problems: 
 		\begin{equation}\label{Dirichleteigenvalue}\begin{cases}  L_g(u)=\lambda_1^D(g)u & \mbox{on } M, \\ u=0 & \mbox{on } \partial M,\end{cases}
 		\end{equation}
 		\begin{equation}\label{Beigenvalue}\begin{cases}  L_g(u)=0 & \mbox{on } M, \\ B_g(u)=\lambda_1^B(g)u & \mbox{on } \partial M\end{cases}
 		\end{equation}
\begin{equation}\label{Leigenvalue}\begin{cases}  L_g(u)=\lambda_1^L(g)u & \mbox{on } M, \\ B_g(u)=0 & \mbox{on } \partial M\end{cases}
 		 	 	\end{equation}
   
   It can be seen that if the first  eigenvalue of Dirichlet eigenvalue problem, 
$\lambda_1^D$, is negative then $Q(M,\partial M, [g])=-\infty$ (see \cite{Escobar3}). 
We have that \[ sign\big(\tilde{Y}(M,\partial M, [g])\big)=sign\big(\lambda_1^B(g)\big)=sign\big(\lambda_1^L(g)\big)=sign\big(Y(M,\partial M, [g])\big).\]  Therefore, if $Y(M,\partial M,[g])\geq 0$  then $\tilde{Y}(M,\partial M, [g])$ is finite. For instance, the latter situation holds if $[g]$ admits a metric of non-negative scalar curvature.
 	\end{Remark}

 A consequence of the above  theorem above is:

 \begin{Corollary}\label{Corolario1}
 	Let assume that $k(f)\geq 1$ and $\tilde{Y}(M,\partial M, [g])$ is finite. Then there exist  Riemannian metrics $h_1$ and $h_2$ that belong to $[g]_f$ such that
 	\begin{itemize}
 		\item[i)] $h_1$ has constant scalar curvature and  $\partial M$ is minimal.  
 		\item[ii)] $h_2$ is scalar flat and the mean curvature is constant on $\partial M$.
 		 	\end{itemize}
 	
 \end{Corollary}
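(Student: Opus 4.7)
The strategy is to reduce each statement to Theorem~\ref{A} at the critical exponent. From the conformal transformation laws \eqref{curvaturaescalar}--\eqref{curvaturamedia}, the metric $u^{p_n-2}g$ has constant scalar curvature and minimal boundary if and only if $u>0$ solves system \eqref{ThmA} at $s=p_n$, and the metric $v^{p_n-2}g$ is scalar flat with constant mean curvature on $\partial M$ if and only if $v>0$ solves system \eqref{ThmB} at $s=p_n^{\partial}=\tfrac{2(n-1)}{n-2}$. Moreover, a conformal change by a factor $\varphi\in S_f$ preserves $[g]_f$: writing $\varphi=\psi\circ f$, the identity $\|\nabla f\|^2_{\varphi^{p_n-2}g}=\varphi^{2-p_n}\|\nabla f\|^2_g$ together with the standard conformal formula for the Laplacian show that both $\|\nabla f\|^2$ and $\Delta f$ remain functions of $f$, and being locally constant on $\partial M$ is a property of $f$ alone.

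The hypothesis $s_g\in S_f$ needed by Theorem~\ref{A} is not part of the data, so I first secure it by applying Theorem~\ref{A}(i) with $s=2$. At $s=2$ the inequality in~(b) reads $2<\tfrac{2(n-k(f))}{n-k(f)-2}$ and holds whenever $k(f)<n-2$, while~(a) handles $k(f)\geq n-2$; thus the theorem applies for any $k(f)\geq 1$ and produces a positive $\varphi\in S_f$ with $L_g\varphi=\lambda\varphi$ and $B_g\varphi=0$. Setting $g_1:=\varphi^{p_n-2}g$, formula \eqref{curvaturaescalar} gives $s_{g_1}=\lambda\varphi^{2-p_n}\in S_f$, while \eqref{curvaturamedia} gives $h_{g_1}=0$; also $g_1\in[g]_f$ and $[g_1]=[g]$, so the finiteness of $\tilde{Y}(M,\partial M,[g])$ passes to $g_1$.

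Now I invoke Theorem~\ref{A} a second time on $(M,g_1)$. For part~(i) take $s=p_n$; the inequality in~(b) becomes $\tfrac{2n}{n-2}<\tfrac{2(n-k(f))}{n-k(f)-2}$, which on clearing the positive denominators reduces to $0<2k(f)$ and so holds for $k(f)\geq 1$, while~(a) again covers $k(f)\geq n-2$. Theorem~\ref{A}(i) thus yields a positive $u\in S_f$ solving \eqref{ThmA} for $g_1$ at $s=p_n$, and $h_1:=u^{p_n-2}g_1=(u\varphi)^{p_n-2}g\in[g]_f$ has constant scalar curvature and minimal boundary. For part~(ii) take $s=p_n^{\partial}$; the inequality in~(c) reduces by the same computation to $0<k(f)$, and Theorem~\ref{A}(ii), using the finiteness of $\tilde{Y}$, produces a positive $v\in S_f$ solving \eqref{ThmB} for $g_1$ at $s=p_n^{\partial}$, so that $h_2:=v^{p_n-2}g_1\in[g]_f$ is scalar flat with constant mean curvature on $\partial M$.

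Beyond bookkeeping, the only points requiring real verification are the stability of $[g]_f$ under conformal changes by factors in $S_f$ and the two algebraic inequalities above; both are elementary. All analytic content is already packaged in Theorem~\ref{A}, so the present argument is essentially a matter of choosing the right exponent and preparing the metric.
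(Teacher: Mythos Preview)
Your core argument—verifying that the critical exponents $s=p_n$ and $s=p_n^{\partial}$ satisfy the strict inequalities in (b) and (c) of Theorem~\ref{A} whenever $k(f)\geq 1$, and then reading off the metrics $h_1,h_2\in[g]_f$ from the conformal transformation laws \eqref{curvaturaescalar}--\eqref{curvaturamedia}—is exactly the paper's proof, including the observation that conformal factors in $S_f$ keep $f$ isoparametric.

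The preliminary step, however, is circular. You invoke Theorem~\ref{A}(i) at $s=2$ in order to ``secure'' the hypothesis $s_g\in S_f$, but Theorem~\ref{A} itself lists $s_g\in S_f$ among its standing assumptions, so it cannot be applied to a metric for which this is not already known. This is not a formality one can bypass: the passage from a minimizer of $J^s_g$ over $H^2_{1,f}(M)$ to a genuine solution of \eqref{ThmA} (as opposed to a solution only in the weak sense against test functions in $S_f$) uses that $L_g$ preserves $S_f$, which is precisely where $s_g\in S_f$ enters. The step is in any case unnecessary: Corollary~\ref{Corolario1} is introduced in the paper as ``a consequence of the above theorem'' and thus inherits the hypotheses of Theorem~\ref{A}, including $s_g\in S_f$. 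The paper's own proof accordingly applies Theorem~\ref{A} directly at the two critical exponents with no preparatory conformal change. Delete the $s=2$ paragraph and your proof coincides with the paper's.
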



If $(M,g)$ admits an isoparametric function with $K(f)\geq 1$  and  $Y(M, \partial M,[g])>0$ then Han-Li conjecture holds.

\begin{Theorem}\label{Han-Li conjecture} Let assume that $Y(M, \partial M,[g])>0$ and $k(f)\geq 1$, then for any $c\in \R$ there exists a metric $h_{c}\in [g]_f$ with constant scalar curvature 1 and constant mean curvature $c$ on $\partial M$.  
\end{Theorem}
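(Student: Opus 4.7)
The plan is to use Theorem~\ref{A}(i) to normalize the background metric to have scalar curvature $1$ and minimal boundary, and then to prescribe the mean curvature by a mountain-pass argument on $S_f$, using the isoparametric symmetry to force compactness at the critical exponent.

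\emph{Reduction.} Theorem~\ref{A}(i) applies with $s=p_n$ whenever $k(f)\geq 1$: condition $(a)$ covers $k(f)\geq n-2$, and for $1\leq k(f)<n-2$ the inequality in $(b)$, namely $\frac{2n}{n-2}<\frac{2(n-k(f))}{n-k(f)-2}$, simplifies to $k(f)\geq 1$. This yields a positive $u_0\in S_f$ and a constant $\alpha\in\R$ with $L_g u_0=\alpha\,u_0^{p_n-1}$ and $B_g u_0=0$. Integrating by parts after testing against $u_0$ gives $J_g(u_0)\|u_0\|_{p_n}^{2}=\alpha\|u_0\|_{p_n}^{p_n}$; since $Y(M,\partial M,[g])>0$, we get $\alpha>0$, and rescaling $u_0$ produces $\alpha=1$. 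The metric $\hat g:=u_0^{p_n-2}g\in[g]_f$ then satisfies $s_{\hat g}=1$ and $h_{\hat g}=0$ by \eqref{curvaturaescalar}--\eqref{curvaturamedia}. Replacing $g$ by $\hat g$, it remains to find, for every $c\in\R$, a positive $u\in S_f$ with
\[ L_g u=u^{p_n-1}\text{ on }M,\qquad B_g u=c\,u^{p_n/2}\text{ on }\partial M. \]

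\emph{Functional and compactness.} On $H^1_f:=H^1(M)\cap S_f$ consider
\[ F_c(u):=\int_M\!\bigl(a_n|\nabla u|^2+u^2\bigr)\,dv_g-\tfrac{2}{p_n}\int_M u_+^{p_n}\,dv_g-\tfrac{4(n-1)c}{p_n^{\partial}}\int_{\partial M}u_+^{p_n^{\partial}}\,d\sigma, \]
whose critical points satisfy $L_g u=u_+^{p_n-1}$ in $M$ and $B_g u=c\,u_+^{p_n/2}$ on $\partial M$. The decisive point is that the restriction to $S_f$ produces compactness at both critical exponents: writing $u=\varphi\circ f$ reduces the Dirichlet and $L^{p_n}(M)$ energies of $u$ to weighted one-dimensional integrals of $\varphi$ and $\varphi'$ on $[t_-,t_+]$, from which $H^1_f\hookrightarrow L^{p_n}(M,dv_g)$ is compact; and since $f$ is locally constant on $\partial M$, each $u\in H^1_f$ is constant on every connected component of $\partial M$, so the trace map $H^1_f\to L^{p_n^{\partial}}(\partial M,d\sigma)$ has finite-dimensional range and is trivially compact.

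\emph{Mountain pass and conclusion.} By these embeddings, the quadratic term of $F_c$ dominates on small $H^1_f$-spheres, so $F_c$ is strictly positive on one, while $F_c(tu)\to-\infty$ as $t\to+\infty$ along any positive $u\in H^1_f$ with $\int_M u^{p_n}>0$. Palais--Smale sequences at any level are bounded via the standard combination of $F_c(u_n)$ and $F_c'(u_n)\cdot u_n$, and converge strongly in $H^1_f$ thanks to the compact embeddings. A mountain-pass argument thus yields a non-zero critical point $u_c\in H^1_f$, non-negative after testing $F_c$ against $|u_c|$, smooth by elliptic regularity, and strictly positive by the strong maximum principle together with the Hopf boundary lemma. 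The metric $h_c:=u_c^{p_n-2}g\in[g]_f$ then has $s_{h_c}=1$ and $h_{h_c}=c$ by \eqref{curvaturaescalar}--\eqml{curvaturamedia}, as required.

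The main obstacle, at the critical Yamabe exponents $p_n$ in $M$ and $p_n^{\partial}$ on $\partial M$, would ordinarily be loss of compactness through bubbling of PS sequences; here the isoparametric symmetry captured by $k(f)\geq 1$ sidesteps the difficulty entirely, since restricting to $S_f$ replaces the critical Sobolev embedding by a one-dimensional problem in the interior and a finite-dimensional problem on the boundary, both of which are automatically compact.
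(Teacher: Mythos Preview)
Your approach is correct and genuinely different from the paper's. The paper proves Theorem~\ref{Han-Li conjecture} through Proposition~\ref{Han-liconjecture-1}: it minimizes the energy $E$ over the constraint sets $B^{a,b}_{p_n,p_n^\partial,f}$, producing for each $(a,b)\in(0,\infty)\times\R$ a solution of \eqref{EYG} whose boundary constant $c_{a,b}$ is determined by the Lagrange multipliers, and then varies $(a,b)$ using monotonicity and continuity of $Y^{a,b}_{p_n,p_n^\partial;f}$ to show that $c_{a,b}$ ranges over all of $\R$. You instead first normalize to $s_g=1$, $h_g=0$ via Theorem~\ref{A}(i) and then, for each fixed $c$, run a direct mountain-pass argument on $H^1_f$. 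Your observation that the trace $H^1_f\to L^{p_n^\partial}(\partial M)$ has finite-dimensional range, because $f$ is locally constant on $\partial M$, is sharper than invoking Lemma~\ref{fSobolevembedding}(iii) and gives boundary compactness for free. The paper's route is closer in spirit to Escobar and Han--Li, tracking how the output constants depend on the constraint parameters; yours is a cleaner existence argument once compactness is available, and it avoids the limiting analysis in $a,b$ entirely.

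Two points deserve tightening. First, your justification of the interior compactness $H^1_f\hookrightarrow L^{p_n}(M)$ as ``reducing to a one-dimensional problem'' is too quick: the resulting weighted ODE on $[t_-,t_+]$ has level-set volume weight behaving like $r^{\,n-1-k_\pm}$ near the focal submanifolds, so its effective Sobolev dimension is $n-k(f)$, not $1$. The exponent $p_n$ is subcritical for that dimension exactly when $k(f)\ge 1$; if a focal set were a point the reduction would still be critical and compactness would fail. This is precisely the content of Lemma~\ref{fSobolevembedding}(ii), which you should cite rather than appeal to one-dimensionality alone. Second, for Palais--Smale boundedness the combination that works for every sign of $c$ is $F_c(u_n)-\tfrac{1}{p_n^\partial}F_c'(u_n)u_n$: this annihilates the boundary term and leaves $(1-\tfrac{2}{p_n^\partial})$ times the $H^1$-energy plus a nonnegative bulk piece. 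The combination with coefficient $\tfrac{1}{p_n}$ leaves a boundary remainder whose sign depends on $c$. Finally, positivity is cleanest by testing $F_c'(u_c)$ against $(u_c)_-$ (the $u_+$ truncation forces $(u_c)_-\equiv 0$), rather than by comparing $F_c(|u_c|)$ with $F_c(u_c)$.
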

\vspace{0.5 cm}

By Corollary \ref{uniqueness}, see Section \ref{Section3}, if $\partial M$ is connected  the metric $h_2\in [g]_f$ provided by Corollary \ref{Corolario1} is unique. However, for the minimal boundary Yamabe problem we might have multiplicity among the metrics in $[g]_f$.

   Let $(M^m,g)$ be a compact Riemannian manifold ($m\geq 2$) with non-empty   boundary, positive constant scalar curvature and minimal boundary.  Let $(N^n,h)$ be any closed Riemannian manifold with positive constant scalar curvature and $t>0$, then $(M\times N,g(t)=g+th)$  is a manifold with constant scalar curvature $s_{g(t)}=s_g+\frac{1}{t}s_h$ and boundary $\partial M\times N$. The mean curvature  on $\partial M\times N$  is zero.  Assume that $f$ is an isoparametric function of $(M,g)$.  After we have done the identification between real valued functions on $M$ and real valued functions on $M\times N$  that do not depend on $N$ is easy to verify that an isoparametric function of $(M,g)$ is an isoparametric function of $(M\times N,g(t))$ as well.

With the assumptions mentioned above we obtain:

\begin{Theorem}\label{localbifurcation}
	There exists a sequence of positive real numbers $\{t_i\}_{i\in \N  }$  that   tends to zero
	 and satisfies that for any sequence of positive numbers  $\{\varepsilon_i\}_{i\in \N}$    there exist a sequence $\{u_i\}_{i\in \N}$, with $u_i\in S_f$,  and  a sequence of positive numbers $\{\gamma_i\}_{i\in \N}$ such that 
	\begin{itemize}
		\item  $|\gamma_i-t_i|<\varepsilon_i$,
		\item $u_i^{p_{m+n}-2}g(\gamma_i)\in [g(\gamma_i)]_f$ is a  constant scalar curvature with minimal boundary.
	\end{itemize}
\end{Theorem}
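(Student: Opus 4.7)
The plan is to apply a local bifurcation argument starting from the trivial branch $u_t\equiv 1$ of positive solutions of the Yamabe equation with parameter $c=s_{g(t)}=s_g+s_h/t$, detecting bifurcation inside the subspace $S_f\subset C^\infty(M\times N)$. Every $v\in S_f$ is $N$--independent, so $\Delta_{g(t)}$ preserves $S_f$ and acts there as $\Delta_g$; the Neumann boundary condition on $\partial M\times N$ reduces to the Neumann condition on $\partial M$; and the isoparametric reduction identifies $S_f$ with a space of functions on $[t_-,t_+]$, yielding a Sturm--Liouville problem. By the symmetric--criticality style arguments used in the proof of Theorem \ref{A}, any solution found within $S_f$ is automatically a solution of the full Yamabe equation with minimal boundary on $(M\times N,g(t))$.

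Linearizing the Yamabe map $F(u,t)=a_{m+n}\Delta_g u+s_{g(t)}u-s_{g(t)}u^{p_{m+n}-1}$ at $u=1$ gives, for $v\in S_f$,
\[
F_u(1,t)[v]=a_{m+n}\Delta_g v-(p_{m+n}-2)\,s_{g(t)}\,v,\qquad \tfrac{\partial v}{\partial\eta}=0 \text{ on }\partial M.
\]
Let $0=\mu_0<\mu_1<\mu_2<\cdots\to\infty$ be the Neumann eigenvalues of $\Delta_g$ restricted to $S_f$; these are all simple because the isoparametric reduction turns the eigenvalue problem into a regular Sturm--Liouville problem on $[t_-,t_+]$. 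The linearization degenerates exactly when $(p_{m+n}-2)s_{g(t)}/a_{m+n}=\mu_k$ for some $k\geq 1$, which rewrites as
\[
t=t_k:=\frac{s_h}{\tfrac{a_{m+n}}{p_{m+n}-2}\mu_k-s_g},
\]
well defined and positive for every sufficiently large $k$, with $t_k\to 0^+$; reindexing yields the sequence $\{t_i\}$ in the statement.

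At each such $t_k$ I would invoke the Crandall--Rabinowitz local bifurcation theorem applied to $F$ between appropriate H\"older spaces of functions in $S_f$ satisfying the Neumann condition. Simplicity of $\mu_k$ provides a one--dimensional kernel spanned by the $k$-th eigenfunction $\varphi_k\in S_f$ (non--constant for $k\geq 1$), and the transversality condition reduces to
\[
\langle F_{tu}(1,t_k)[\varphi_k],\varphi_k\rangle=\frac{(p_{m+n}-2)\,s_h}{t_k^2}\,\|\varphi_k\|^2\neq 0,
\]
which is clear since $s_h>0$. The theorem therefore produces a $C^1$ curve of non--trivial solutions $(\gamma,u(\gamma))\in(0,\infty)\times S_f$ emanating from $(t_k,1)$; for any prescribed $\varepsilon_i>0$ one picks $(\gamma_i,u_i)$ on the $i$-th curve with $|\gamma_i-t_i|<\varepsilon_i$, positivity following from continuity, and the metric $u_i^{p_{m+n}-2}g(\gamma_i)\in[g(\gamma_i)]_f$ has constant scalar curvature and minimal boundary. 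The main obstacle is the functional--analytic setup --- choosing Banach spaces of $S_f$-functions on which $F$ is smooth and Fredholm of index zero and ruling out that the bifurcating branch collapses back onto the trivial family --- but in the present isoparametric setting this is handled by the one--dimensional Sturm--Liouville reduction and the non--constancy of $\varphi_k$.
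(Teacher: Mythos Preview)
Your proposal is correct and follows essentially the same approach as the paper: the paper likewise reduces to the map $F^{p_{m+n}}(u,\lambda)=\Delta_g u+\lambda(u-u^{p_{m+n}-1})$ on $S_f$ with Neumann boundary condition, uses the isoparametric/Sturm--Liouville reduction to obtain simplicity of the $S_f$-Neumann eigenvalues $\mu_i^{f,\mathcal N}$, verifies the Crandall--Rabinowitz transversality condition $D_{u\lambda}F^{p_{m+n}}(1,\lambda_i)(v_i)\notin\operatorname{Range}D_uF^{p_{m+n}}(1,\lambda_i)$ (your inner-product computation is an equivalent formulation), and then substitutes $\lambda(t)=s_{g(t)}/a_{m+n}$ to obtain $t_i=s_h/\big((m+n-1)\mu_i^{f,\mathcal N}-s_g\big)\to 0$. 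The only cosmetic difference is that the paper bifurcates in the abstract parameter $\lambda$ and afterwards translates to $t$, whereas you bifurcate directly in $t$; the formulas and conclusions coincide.
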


\vspace{0.5 cm}

The article is organized as follows. In Section \ref{Isoparametric function} we discuss some facts about the theory of isoparametrics functions and isoparametric hypersurfaces on manifolds with boundary.  In Section \ref{Section3} we prove Theorem \ref{ThmA}, Corollary \ref{Corolario1} and Theorem \ref{Han-Li conjecture}. Finally, in Section \ref{Section4} we prove Theorem \ref{localbifurcation}.

\section{Isoparametric functions}\label{Isoparametric function}

In this section we revisit some well known facts on the theory of isoparametric functions on closed Riemannian manifolds and we point out the similarities and some differences that arise in the setting of compact manifold with non-empty boundary.

Let $f:M\longrightarrow [t_-,t_+]$ be an isoparametric function. We denote with $M_t$ the level set $f^{-1}(t)$ and with $B_f:=\big\{p\in M:\ \nabla f|_p=0\big\}$.

Let us see a few examples of isoparametric functions on manifolds with boundary.

\begin{Example}\label{exmpl1}
	Let $-1\leq c_1< c_2\leq 1$. We write  $S_{c_1,c_2}^n$ the points of the unitary sphere $S^n\subset \R^{n+1}$   such that $c_1\leq x_{n+1}\leq c_2$. We consider $S_{c_1,c_2}^n$ endowed with the metric induced by $g^n_0$.   If $|c_i|<1$, then $\partial S_{c_1,c_2}^n= \Sigma_1\cup \Sigma_2$ where $\Sigma_i=\{x:\ x_{n+1}=c_i\}$. 
	The functions $f(x)=x_{n+1}$  and  $h(x)=x_1^2+\dots+ x^2_n-x^2_{n+1}$ fulfill  conditions \eqref{Cgradient} and \eqref{Claplacian} and they are constant along $\Sigma_i$. Therefore $f$ and $h$ are isoparametric functions of $(S_{c_1,c_2}^n,g^n_0)$.
	
	The function  $f_j(x)=x_{j}$ with $j<n+1$ satisfies Conditions \eqref{Cgradient} and \eqref{Claplacian},  however $\Sigma_i$   is not  included in any level set if $|c_i|<1$.
	\end{Example}

\begin{Example}\label{example2} Let $\tilde{M}$ be a closed manifold and let $\Sigma\subset \tilde{M}$ be a connected  hypersurface that is the boundary of some region $M$.  Let assume that $\Sigma$ is the regular level set  of a Morse-Bott function $f$, such that the critical level sets of $f$ are exactly  $M_+$  and $M_{-}$. In addition we assume that  $M_+$  and $M_{-}$ are connected and have codimension at least 2. Then, as we have mentioned in the Introduction,  by (Theorem 1.1, \cite{Qian-Tang})  there exists a Riemannian metric $g$ on  $\tilde{M}$ such that $f$ is an isoparametric function of $(\tilde{M},g)$. Hence, $f|_M$ is an isoparametric function of $(M, g|_{M})$. 
\end{Example}

If $f$ is an isoparametric function of a closed Riemannian manifold $(M,g)$, Wang proved in (Lemma 3, \cite{Wang}) that $f(B_f)=\{t_-,t_+\}$. However, when $\partial M\neq \emptyset$  $f(B_f)$ might be a proper subset of $\{t_-,t_+\}$, even $B_f=\emptyset$. For instance, let $f:S_{c_1,c_2}^n\longrightarrow [-1,1]$ as in Example \ref{exmpl1}. If $-1<c_1<c_2<1$, $B_f=\emptyset$; if $c_1=-1$ and $c_2<1$ or if $c_1<-1$ and $c_2=1$  $f(B_f)=-1$ and $f(B_f)=1$, respectively; $h(B_h)=\{0, 1\}$ for $h(x)=x_1^2+\dots+ x^2_n-x^2_{n+1}$ on $S^n_{0,1}$.

Let $f$ be an isoparametric function of $(M,g)$ and let $[t_1,t_2]$ an interval such that $f(B_f)\cap [t_1,t_2]=\emptyset$.  With the same proof of (Lemma 1, \cite{Wang}) we can see that for any $x\in M_{t_1}$ and $y\in M_{t_2}$ it yields
 \begin{equation}\label{parallel}
d(x,M_{t_2})=d(y,M_{t_1})=\int_{t_1}^{t_2}\frac{df}{\sqrt{b(f)}}.
\end{equation}
Equation \eqref{parallel} says that the regular level sets are equidistant to each other. The shortest path between $M_{t_1}$ and $M_{t_2}$ are the integral curves of the tangent field $(b\circ f)^{-\frac{1}{2}} \nabla f$.

If $M_{t_2}$ is the only critical level set between the level sets $\{M_t\}_{t\in [t_1,t_2]}$  then 
\begin{equation}\label{parallel2}
d(M_{t_1},M_{t_2})=\lim_{t\to t_2^-}\int_{t_1}^t\frac{df}{\sqrt{b(f)}}.
\end{equation}
This equation implies that $f(B_f)\subseteq \{t_-,t_+\}$. Indeed, if there exists $x\in B_f$ such that $f(x)=t_0\in(t_-,t_+)$, let $\varepsilon>0$ such that $f(B_f)\cap [t_0-\varepsilon, t_0]=t_0$.  On one hand from \eqref{parallel2} we have that distance between $M_{t_0-\varepsilon}$ and $M_{t_0}$ is equals to $\lim_{t\to t_0^-}\int_{t_0-\varepsilon}^t\frac{df}{\sqrt{b(f)}}$. On the other hand, $b$ attains a minimum at $t_0$. Since $t_0$ is an interior point of $[t_-,t_+]$,  then $b'(t_0)=0$. Therefore, in  neighborhood of  $t_0$, the Taylor expansion of $b$ is  $c(t-t_0)^2+o((t-t_0)^2)$ for some constant $c$. However,  this implies that \[\lim_{t\to t_0^-}\int_{t_0-\varepsilon}^t\frac{df}{\sqrt{b(f)}}=+\infty\] which is a contradiction.

For  $x\in M$  let  $H_x:T_xM\longrightarrow T_xM$ be the linear operator induced  by the Hessian of $f$, $H_f$, at $x$. We write with $E_{\lambda}(x)$ the eigenspace associated to the eigenvalue $\lambda$ of $H_x$.   
 
 The proof of the following proposition is similar to the one of Theorem A point a) in \cite{Wang}. We refer the reader to \cite{Wang} for the proof.

\begin{Proposition} $M_+$ and $M_-$ are (possible non-connected) submanifolds of $M$. If $M_+\cap B_f=\emptyset$ ($M_-\cap B_f=\emptyset$) then $M_+$ ($M_-$) is a hypersurface. If $M_+\subseteq B_f$ ($M_-\subseteq B_f$) and $x\in M_{+}$ ($x\in M_-$) the dimension of the connected component of $M_+$ $(M_-)$ where $x$ belongs  is equals to $\dim(E_{0}(x))$.   	
\end{Proposition}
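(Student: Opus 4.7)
The proof follows the strategy of Wang's argument for Theorem A point a) in \cite{Wang}, adapted to account for the new possibility that $M_\pm$ lies entirely on the boundary $\partial M$. The two key inputs are an algebraic identity for the Hessian of $f$ at critical points and the hypothesis that $f$ is locally constant on $\partial M$.

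First, I would differentiate the isoparametric relation $\|\nabla f\|_g^2 = b\circ f$. One derivative gives $2\, H_f\cdot \nabla f = b'(f)\,\nabla f$; a second differentiation, evaluated at a point $x\in B_f$ where $\nabla f|_x = 0$, yields the identity
\begin{equation*}
2\, H_x^{\,2} = b'(f(x))\, H_x
\end{equation*}
as endomorphisms of $T_xM$. Hence every eigenvalue of $H_x$ is either $0$ or $\mu_\pm := \tfrac{1}{2} b'(t_\pm)$, and $T_xM$ splits orthogonally as $E_0(x)\oplus E_{\mu_\pm}(x)$. Moreover, the diverging-integral argument already used in the paper to prove $f(B_f)\subseteq\{t_-,t_+\}$ also shows $\mu_\pm \neq 0$ whenever $M_\pm\cap B_f\neq\emptyset$, since a double zero of $b$ at $t_\pm$ would make the distance from any regular level set to $M_\pm$ infinite.

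Next, suppose $M_+\cap B_f=\emptyset$ (the case of $M_-$ is symmetric). Each $x\in M_+$ is a maximum point of $f$ with $\nabla f|_x\neq 0$, so $x$ cannot lie in the interior of $M$; hence $M_+\subseteq\partial M$. Since $f|_{\partial M}$ is locally constant, $M_+$ is precisely the disjoint union of those connected components of $\partial M$ on which $f\equiv t_+$, and is therefore a smooth $(n-1)$-dimensional submanifold.

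Finally, suppose $M_+\subseteq B_f$ and fix $x\in M_+$. The Hessian identity shows that $f$ has a Morse--Bott critical point at $x$ with nullity $\dim E_0(x)$ and one nonzero eigenvalue $\mu_+$ of multiplicity $\dim E_{\mu_+}(x)$. Applying the Morse--Bott lemma (or, as in Wang, integrating the gradient of $f$ along normal slices $\exp_x(E_{\mu_+}(x))$) produces local coordinates in which $f-t_+$ is a nondegenerate quadratic form in the $E_{\mu_+}(x)$-directions and identically zero in the $E_0(x)$-directions. Consequently the component of $M_+$ through $x$ coincides locally with $\exp_x(E_0(x))$, yielding a submanifold of dimension $\dim E_0(x)$. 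The step I expect to be most delicate is verifying that no other points of $M_+$ appear in a neighborhood of $x$; this is ensured precisely by the nondegeneracy of $f-t_+$ transverse to $E_0(x)$, which in turn follows from $\mu_+\neq 0$.
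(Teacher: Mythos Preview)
Your proposal is correct and follows exactly the route the paper itself indicates: the paper does not give an independent proof but simply states that the argument is the same as Wang's proof of Theorem~A~a) in \cite{Wang}, and your sketch reproduces that argument (the Hessian identity $2H_x^2=b'(f(x))H_x$ at critical points, the dichotomy on eigenvalues, and the Morse--Bott normal form), together with the one new observation needed here---that a maximum point with $\nabla f\neq 0$ must lie on $\partial M$, where local constancy of $f|_{\partial M}$ forces $M_+$ to be a union of boundary components. Nothing further is required.
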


We call $M_+$ and $M_-$ focal submanifolds although they might not be focal sets (see Proposition 2.1, \cite{Ge-Tang}).

\begin{Remark} If a connected component $C$ of a focal submanifold does not belong to $\partial M$, then $C\subseteq B_f$. 
	
\end{Remark}

Let $P$ be a submanifold of $M$ and $x\in P$. We denote with $N^1_xP$ the fiber of the unitary normal bundle of $P$ at $x$. Given $t>0$  we say that $S_tP$  is a {\it tube of radius $t$} over $P$ if $$S_tP:=\big\{y=exp_x(tv):\ \mbox{where}\ x\in P\ \mbox{and}\ v\in N^1_xP \big\}.$$ 
By $S_{\leq t}P$ we denote  
$$S_{\leq t}P=\big\{y=exp_x(sv):\ \mbox{where}\ x\in P,\ 0\leq s\leq t\ \mbox{and}\ v\in N^1_xP \big\}.$$ 

Let $M_{t_0}$ be a regular level set and let $r$ be the distance between $M_{t_0}$ and the focal submanifold $V$. Let $M_{t_0}^{r}\subset M_{t_0}$ be the union of the connected components  of  $M_{t_0}$   whose distant to $M_+$  is $r$.  We define $\phi_{r}:=M_{t_0}^{r}\longrightarrow V$ by 
\[\phi_{r}(x)=\exp_x(r\xi_N(x)).\]
where $\xi_N(x)= \frac{\nabla f}{\sqrt{b(f(x))}}$ if $V=M_+$ or $-\xi_N(x)$ if $V=M_-$.  Following \cite{Wang}  (see Lemma 7 and Lemma 9, \cite{Wang}) it yields
  \[\phi_{r_0}^{-1}(V )=M_{t_0}^{r}\subseteq M_{t_0}\]
which implies that  $M_{t_0}^{r}$ is a tube over $M_+$. We obtained the following:

\begin{Proposition}	A regular level set is a union of tubes over either of the focal submanifolds.  
\end{Proposition}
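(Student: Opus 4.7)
My approach is to fix a focal submanifold $V\in\{M_+,M_-\}$ and exhibit $M_{t_0}$ as a disjoint union of tubes over $V$, one for each distance realized by a connected component of $M_{t_0}$ to $V$. By Equation \eqref{parallel}, the function $x\mapsto d(x,V)$ is locally constant on $M_{t_0}$, so it takes only finitely many values $r_1,\dots,r_N$. Decompose $M_{t_0}=\bigsqcup_j M_{t_0}^{r_j}$ into the unions of connected components at common distance $r_j$ to $V$. It then suffices to show that each $M_{t_0}^{r_j}$ equals the tube $S_{r_j}V$.

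Fix $r=r_j$ and consider the map $\phi_r$ from the statement immediately preceding the proposition. The first key point is that the integral curves of the unit field $\xi_N=\nabla f/\sqrt{b\circ f}$ are unit-speed geodesics; this follows from a direct computation using the isoparametric conditions \eqref{Cgradient} and \eqref{Claplacian}, exactly as in Wang's Lemma 7, and it persists in the boundary setting because each such curve issued from a regular point of $f$ stays in the interior of $M$, thanks to the hypothesis that $f$ is locally constant on $\partial M$. Using Equation \eqref{parallel2}, the geodesic $s\mapsto\exp_x(s\xi_N(x))$ starting at $x\in M_{t_0}^r$ reaches $V$ exactly at parameter $s=r$. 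Hence $\phi_r$ is well-defined with image in $V$, and by construction $M_{t_0}^r\subseteq S_rV$.

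For the reverse inclusion, fix $y\in V$ and a unit normal $v\in N^1_yV$, and let $\gamma(s)=\exp_y(sv)$. Wang's Lemma 9, whose argument is purely local around $V$ and so unaffected by the presence of $\partial M$, identifies $\gamma$, for $s\in(0,r)$, with the integral curve of $\pm\xi_N$ passing through $\gamma(s)$; along it $f$ evolves monotonically at rate $\sqrt{b\circ f}$ in view of \eqref{parallel}. Therefore $f(\gamma(r))=t_0$, so $\gamma(r)\in M_{t_0}$, and the connected component of $M_{t_0}$ containing $\gamma(r)$ lies at distance $r$ from $V$. This places $\gamma(r)$ in $M_{t_0}^r$, establishing $S_rV\subseteq M_{t_0}^r$ and hence $M_{t_0}^r=S_rV$. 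Taking the union over all $r_j$ yields the claimed decomposition.

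The main obstacle, which is really the analytic content of Wang's Lemmas 7 and 9, is identifying the unit normalized gradient flow of the isoparametric function with the normal geodesic exponential issuing from $V$; once this identification is in place, the rest is topological bookkeeping. In the boundary setting the one additional point to verify is that none of the geodesics employed here crosses $\partial M$, but this is automatic because $f|_{\partial M}$ is locally constant, so $\partial M$ sits inside level sets of $f$, and a geodesic meeting such a level set transversally, as ours do, cannot do so along $\partial M$.
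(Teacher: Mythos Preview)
There is a genuine gap. Your claim that the integral curves of $\xi_N$ ``stay in the interior of $M$'' because $f$ is locally constant on $\partial M$ is precisely backwards: since each component of $\partial M$ lies inside a level set of $f$, the integral curves of $\xi_N=\nabla f/|\nabla f|$ --- being orthogonal to \emph{every} level set --- meet $\partial M$ orthogonally and can terminate there. The cylinder example in the second remark after the Proposition exhibits this: from the component of $M_t$ at $z=2\pi-\arccos t$ (with $t\in(0,\tfrac{\sqrt{2}}{2})$) the flow along $\xi_N$ runs into the boundary at $z=\tfrac{7}{4}\pi$ and never reaches $M_+$. Thus your map $\phi_r$ need not land in $V$, the distance $r_j=d(x,V)$ need not be realised by the geodesic $s\mapsto\exp_x(s\xi_N(x))$, and in the reverse direction $\exp_y(r_j v)$ may exit $M$ before time $r_j$; neither inclusion $M_{t_0}^{r_j}\subseteq S_{r_j}V$ nor $S_{r_j}V\subseteq M_{t_0}^{r_j}$ is justified.

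This is not a cosmetic issue: it invalidates your overall plan of fixing one $V\in\{M_+,M_-\}$ and decomposing $M_{t_0}$ as a disjoint union of tubes over that single $V$. The remarks following the Proposition show that in the boundary setting a regular level set is in general not a union of tubes over one focal submanifold alone. The paper's argument, by contrast, relies on the fact that from any connected component of $M_{t_0}$ the gradient flow in \emph{at least one} of the two directions (towards $M_+$ or towards $M_-$) reaches the corresponding focal submanifold without encountering $\partial M$, and invokes Wang's Lemmas 7 and 9 in that direction; the choice of focal submanifold has to be allowed to vary with the component.
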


For closed manifolds any regular level set of an isoparametric function is a tube over any of the focal submanifolds (Theorem A, \cite{Wang}). However, this might be not the case for manifolds with non-empty boundary. For instance, the regular level sets of $f(x)=x_{n+1}^2$ on  $S_{-{\frac{1}{2}},1}^n$ are not a tube over $M_+$.

\begin{Remark} 
	In this setting there are some manifolds that admit isoparametric functions such that some regular level sets are not tubes over neither of the focal submanifolds. For instance, let us consider the cylinder $M= \mathbb{S}^1 \times [-\frac{\pi}{2},\frac{7}{4}\pi]$ and $f(x,y,z)=\cos z$. Since 
	$df= - \sin z \, dz$, where $d \theta$ is the angular component,  we have that 
	\begin{equation*}
		||\nabla f||^2=||df||^2= \sin^2z=1-\cos^2z=1-f^2(x,y,z),
	\end{equation*}
 and 
 \begin{equation*}\Delta_g f=- \cos z=-f(x,y,z).
 \end{equation*}
 $f$ is locally constant on $\partial M$. Therefore, $f$ is an isoparametric function, however, for any $t \in \left(0, \frac{ \sqrt{2}}{2}\right)$, $M_t$ is not a tube over the focal submanifolds.
\end{Remark}

\subsection{Curvature of the level sets}

Let $M_t$ with $t \in (t_-,t_+)$. Let $S_t$ denote the shape operator on $M_t$ and $h_t(x)$ the mean curvature of $M_t$ at $x$ with respect to the unit normal $\xi_N(x)$.   Then, for all $X,Y \in T_xM_t$ we have that
\begin{equation}\label{ShapeHess}
	\langle S_t(X), Y \rangle= - \frac{H_f(X,Y)}{|\nabla f|}.
\end{equation} 
 To see this, recall that the unit normal vector field to $M_t$ is given by $\xi_N(x)= \frac{\nabla f}{\sqrt{b \circ f}}$. 
 We denote with $\rho= \sqrt{b \circ f}=|\nabla f|$,  then we have $\nabla f=\rho \xi_N$. Since the shape operator $S_t$  satisfies $\langle S_t(X),Y \rangle=-\langle \nabla_X \xi_N, Y \rangle$ for any $X,Y \in T_xM_t$ we have  that
\[H_f(X,Y)=
	\langle \nabla_X \nabla f , Y \rangle= \langle \nabla_X(\rho \xi_N), Y \rangle = \langle X(\rho)\xi_N+ \rho \nabla_X \xi_N, Y \rangle  
\]
\[
=X(\rho)\langle \xi_N, Y\rangle + \rho \langle \nabla_X \xi_N, Y\rangle=-\rho \langle S_t(X), Y \rangle
\]
where for the last equality we used that $\langle \xi_N, Y\rangle=0$ since $Y\in T_xM_t$ and $\xi_N$ is orthogonal to $M_t$.

\begin{Proposition}
	Let $t \in (t_-,t_+)$. Then,
	\begin{equation}\label{MeanCLG}
		h_t= \frac{1}{\rho^2}\left(  \rho \Delta_gf - \langle \nabla f, \nabla \rho \rangle \right).
		\end{equation} 
	\end{Proposition}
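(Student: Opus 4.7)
The plan is to take a pointwise trace of \eqref{ShapeHess} over $T_xM_t$ at a regular point and relate the tangential trace of the Hessian to $\Delta_g f$ via the orthogonal splitting $T_xM=T_xM_t\oplus\R\,\xi_N$. Fix $t\in(t_-,t_+)$ and a point $x\in M_t$; since $t$ is regular, $\rho(x)=|\nabla f(x)|>0$ and $\xi_N=\nabla f/\rho$ is a well-defined unit normal to $M_t$. Choose an orthonormal basis $\{e_1,\dots,e_{n-1}\}$ of $T_xM_t$, so that $\{e_1,\dots,e_{n-1},\xi_N\}$ is an orthonormal basis of $T_xM$.

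Applying \eqref{ShapeHess} to each pair $(e_i,e_i)$ and summing yields
\[
h_t=\sum_{i=1}^{n-1}\langle S_t(e_i),e_i\rangle=-\frac{1}{\rho}\sum_{i=1}^{n-1}H_f(e_i,e_i).
\]
Since the total trace of $H_f$ on $T_xM$ is $\Delta_g f$, the sum on the right equals $\Delta_g f-H_f(\xi_N,\xi_N)$, and the remaining task is to evaluate the normal contribution. Using $\nabla f=\rho\,\xi_N$ together with the Leibniz rule for the Levi-Civita connection,
\[
H_f(\xi_N,\xi_N)=\langle \nabla_{\xi_N}(\rho\,\xi_N),\xi_N\rangle=\xi_N(\rho)\,|\xi_N|^2+\rho\,\langle \nabla_{\xi_N}\xi_N,\xi_N\rangle,
\]
and the second summand vanishes because $\xi_N$ has unit length. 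The surviving term simplifies as $\xi_N(\rho)=\langle\nabla\rho,\xi_N\rangle=\rho^{-1}\langle\nabla f,\nabla\rho\rangle$.

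Plugging these ingredients back into the expression for $h_t$ and clearing the common factor $\rho^2$ yields the identity \eqref{MeanCLG}. The argument is entirely pointwise and linear-algebraic, and since $x\in M_t$ was arbitrary it holds on the whole regular level set. The only genuinely delicate step is the careful bookkeeping of the sign conventions already fixed in \eqref{ShapeHess} and in the definition of $\Delta_g$; with those in hand, the formula essentially falls out of the Leibniz rule applied to $\nabla f=\rho\,\xi_N$ combined with the trace decomposition.
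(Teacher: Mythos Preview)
Your approach is exactly the paper's: take an orthonormal frame adapted to the splitting $T_xM=T_xM_t\oplus\R\,\xi_N$, use \eqref{ShapeHess} to express $h_t$ as $-\rho^{-1}$ times the tangential trace of $H_f$, write that tangential trace as the full trace minus the normal contribution $H_f(\xi_N,\xi_N)$, and compute the latter. Your evaluation of $H_f(\xi_N,\xi_N)$ via the Leibniz rule applied to $\nabla f=\rho\,\xi_N$ (using $\langle\nabla_{\xi_N}\xi_N,\xi_N\rangle=0$) is in fact slightly slicker than the paper's route through $\tfrac{1}{2}(\nabla f)(\|\nabla f\|^2)$, but it lands on the same value $\rho^{-1}\langle\nabla f,\nabla\rho\rangle$.

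One point on the ``delicate'' sign bookkeeping you yourself flag: the paper works with the convention $\Delta_g f=-\operatorname{tr}_g H_f$ (this is written explicitly in its proof), so your sentence ``the total trace of $H_f$ on $T_xM$ is $\Delta_g f$'' is off by a sign relative to the ambient conventions. Since you defer the final substitution to ``plugging these ingredients back'' without displaying it, make sure you actually carry that step out with the paper's sign choice for $\Delta_g$ rather than asserting that \eqref{MeanCLG} falls out; as written, your intermediate identities combine to $-\rho^{-1}\big(\Delta_g f-H_f(\xi_N,\xi_N)\big)$ under your stated convention, which is not \eqref{MeanCLG}.
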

\begin{proof}
	Let $x \in M_t$ and let $\{E_1, \hdots, E_{n-1}\}$ be an orthonormal basis of $T_xM_t$.  Since $f$ is constant along $M_t$, then we have $E_i(f)=0$ and $E_i(\rho^{-1})=0$ for all $1 \leq i \leq n-1$.  Using (\ref{ShapeHess}) 
 we have that $H_f(X,Y)=- \rho \langle S_t(X),Y \rangle= \rho \langle \nabla_X \xi_N,Y \rangle$ for $X, Y \in T_xM_t$.   We can compute the Laplace-Beltrami operator of $f$ in the following way
	\[
	\Delta_gf=-\operatorname{tr}_gH_f=- \sum_{i=1}^{n-1}H_f(E_i,E_i)-H_f(\xi_N,\xi_N)
	\]
	\[
	=-\sum_{i=1}^{n-1} \rho \langle \nabla_{E_i} \xi_N,E_i \rangle -\langle \nabla_{\xi_N} \nabla f, \xi_N \rangle=-\sum_{i=1}^{n-1} \rho \langle \nabla_{E_i} \left( \rho^{-1} \nabla f  \right) ,E_i \rangle  -\langle \nabla_{\xi_N} \nabla f, \xi_N \rangle
	\]
	\[
	= -\sum_{i=1}^{n-1}\langle \nabla_{E_i} \nabla f,E_i \rangle -\langle \nabla_{\xi_N} \nabla f, \xi_N \rangle.
	\]
By definition, 
\[
h_t=-\sum_{i=1}^{n-1}\langle \nabla_{E_i} \xi_N, E_i \rangle=-\sum_{i=1}^{n-1} \langle \nabla_{E_i} (\rho^{-1}\nabla f),E_i \rangle = - \frac{1}{\rho} \sum_{i=1}^{n-1}\langle \nabla_{E_i}\nabla f, E_i \rangle 
\]
\[
=\frac{1}{\rho} \left( \Delta_gf-\langle \nabla_{\xi_N} \nabla f, \xi_N \rangle   \right)= \frac{1}{\rho} \left( \Delta_gf-\frac{1}{\rho^2} \langle \nabla_{\nabla f} \nabla f, \nabla f  \rangle \right)
\]
\[
=\frac{1}{\rho} \left( \Delta_g f- \frac{1}{2\rho^2}(\nabla f)(||\nabla f||^2)  \right)= \frac{1}{\rho} \left( \Delta_g f- \frac{1}{2\rho^2}(\nabla f)(\rho^2)  \right)
\]
\[
= \frac{1}{\rho}\left( \Delta_g f- \frac{2\rho}{2\rho^2}(\nabla f)(\rho)  \right)= \frac{1}{\rho} \left( \Delta_g f- \frac{1}{\rho}(\nabla f)(\rho) \right)
\]
\[
=\frac{1}{\rho}\left( \Delta_gf-\frac{1}{\rho}\langle \nabla f, \nabla \rho \rangle \right)=\frac{1}{\rho^2}\left(  \rho \Delta_gf - \langle \nabla f, \nabla \rho \rangle \right).
\]
\end{proof}
This allows us to prove the following
\begin{Proposition}
	$M_t$ has constant mean curvature for all $t \in (t_-,t_+)$.
\end{Proposition}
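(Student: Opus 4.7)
The plan is to substitute the isoparametric conditions directly into formula (\ref{MeanCLG}) and observe that every term on the right-hand side becomes a function of $f$ alone, hence constant on each level set $M_t$.

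First I would note that $\rho=\sqrt{b\circ f}$ and $\Delta_g f=a\circ f$ are, by definition, functions of $f$, so restricted to $M_t$ they reduce to the constants $\sqrt{b(t)}$ and $a(t)$ respectively. The only term in \eqref{MeanCLG} that is not manifestly constant on $M_t$ is $\langle \nabla f,\nabla\rho\rangle$.

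Next I would compute $\nabla \rho$ via the chain rule: from $\rho^2 = b\circ f$ one gets $2\rho\,\nabla\rho = (b'\circ f)\,\nabla f$, so
\[
\nabla\rho=\frac{b'\circ f}{2\rho}\nabla f,
\]
and therefore
\[
\langle\nabla f,\nabla\rho\rangle=\frac{b'\circ f}{2\rho}\|\nabla f\|^2=\frac{(b'\circ f)(b\circ f)}{2\rho},
\]
which is again a function of $f$ alone, hence constant on $M_t$.

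Substituting into \eqref{MeanCLG} gives, on $M_t$,
\[
h_t=\frac{1}{b(t)}\left(\sqrt{b(t)}\,a(t)-\frac{b'(t)\,b(t)}{2\sqrt{b(t)}}\right)=\frac{a(t)}{\sqrt{b(t)}}-\frac{b'(t)}{2\sqrt{b(t)}},
\]
which depends only on $t$, proving that $M_t$ has constant mean curvature. There is no real obstacle here; the statement is essentially a bookkeeping corollary of the previous proposition together with the defining identities \eqref{Cgradient} and \eqref{Claplacian}. The only point worth double-checking is the sign convention implicit in the chosen normal $\xi_N$, but this is already fixed in the derivation of \eqref{MeanCLG}.
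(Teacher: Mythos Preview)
Your proposal is correct and follows essentially the same approach as the paper: substitute the isoparametric identities into \eqref{MeanCLG}, compute $\nabla\rho$ via the chain rule to see that $\langle\nabla f,\nabla\rho\rangle$ is a function of $f$, and conclude with the explicit formula $h_t=\dfrac{2a-b'}{2\sqrt{b}}\circ f$.
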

\begin{proof}
	Using the last proposition and Equations (\ref{Cgradient}) and (\ref{Claplacian}) we have
	\[
	h_t= \frac{1}{\rho^2}\left(  \rho \Delta_gf - \langle \nabla f, \nabla \rho \rangle \right)= \frac{1}{\rho^2} \left(  \rho \, \cdot a  \circ f - \langle \nabla f, \nabla \sqrt{||\nabla f||^2} \rangle \right)
	\]
	\[
	=\frac{1}{\rho^{2}}\left( \rho \cdot a \circ f -\langle \nabla f, \nabla \sqrt{ b \circ f} \rangle \right)= \frac{1}{\rho^2}\left(\rho \cdot a \circ f - \left\langle \nabla f, \frac{1}{2\rho} (b' \circ f )\nabla f  \right\rangle \right)
	\]
	\[
	=\frac{1}{\rho}a \circ f- \frac{1}{2\rho} b' \circ f- = \left( \frac{ 2a-b'}{2 \sqrt{b} } \right) \circ f.
	\]
	Since $f$ is constant along $M_t$, the last expression does not depend on $x \in M_t$.
\end{proof}
\begin{Remark} If $(M,g)$ is a manifold with non-constant mean curvature on the boundary, then  $(M,g)$ does not admit an isoparametric function.
\end{Remark}

In \cite{Ge-Tang2}, Ge and Tang proved that the focal submanifolds of an isoparametric function $f$ on a complete Riemannian manifold $M$ are minimal submanifolds. The strategy they used was the following. Using the results of Wang (\cite{Wang}) we know that the focal submanifolds are smooth manifolds and every $M_t$ is a tubular hyper-surface over either of the focal submanifolds. To see, for instance, that $M_+$ is a minimal submanifold, take a point $x \in M_+$ and $v \in N_x^1M_{+}$ a unitary normal vector to $T_xM_+$. If we consider $\eta_v(t):= \exp_x(tv)$ the unique geodesic through $x$ in the direction of $v$, we can take Fermi coordinates $(x_1, \hdots, x_n)$ centered at $x$ with respect to $M_+$ such that $\frac{\partial}{\partial x_n}|_{\eta_v(t)}=N_{|_{\eta_v(t)}}$, where $N$ is the (outward) unitary vector field of $M_t$. Using this coordinate system we can relate $T_v$, the shape operator of $M_+$ in the direction of $v$, with $S_t$ by obtaining the Taylor expansion of $S_t$ in terms of $t$. Let $T^v_{ab}$, $1\leq a,b \leq m=\dim M_+$, and $S_{ij}$, $1 \leq i,j \leq n$ be the coefficients in this coordinate system of $T_v$ and $S_t$, respectively.  By several straightforward calculations, Ge and Tang proved that at any point of $\eta_v(t)=\exp_x(tv) \in M_t$ we have
\[
S_t=(S_{ij})=\begin{pmatrix}
T_v+tA+O(t^2) & &tB+O(t^2)& &O(t^2) \\
tC+O(t^2)     & &-t^{-1}I+D+O(t^2)& &O(t^2)\\
0& &0& &0
\end{pmatrix}
\]
where the matrices $A,B,C, D$ are given by \[A:= \left( \left\langle R_{v \frac{\partial}{\partial x_a}}v, \frac{\partial }{\partial x_b}\right\rangle+ \sum_c T^v_{ac}T^v{cb}  \right),\]
\[
D:=\frac{1}{3} \left( \left\langle R_{v,\frac{\partial}{\partial x_l}}v, \frac{\partial}{\partial x_k} \right\rangle \right)
\]
\[
B:= \left(  \left\langle R_{v\frac{\partial}{\partial x_a}}v,  \frac{\partial }{\partial x_k} \right\rangle \right)
\]
\[
C:= \frac{1}{3}\left(  \left\langle R_{v \frac{\partial}{\partial x_l} }v  , \frac{\partial}{\partial x_b}       \right\rangle \right)
\]
for $a,b,c \in \left\{ 1, \hdots, m \right\}$, $l,k \in \{ m+1, \hdots, n-1\}$, and $O(t^2)$ are matrices of order less than $2$. Therefore, the principal curvatures of $M_t$ at $\eta_v(t)=\exp_x(tv)$ are the eigenvalues of 
\[
\overline{S}_t:= 
\begin{pmatrix}
T_v+tA+O(t^2)& &tB+O(t^2) \\
tC+O(t^2)& &t^{-1}I+D+O(t^2)
\end{pmatrix}.
\]
Since the mean curvature of $M_t$ is constant then
\[
h_t=\operatorname{tr}(\overline{S}_t)= \operatorname{tr}(T_v)- \frac{n-m-1}{t}+t( \operatorname{tr}A+\operatorname{tr}D)+O(t^2),
\]
does not depend on $x \in M_+$ and $v \in N_x^1M_+$.  Therefore, for all $v \in N_p^1M_+$ we have $\operatorname{tr}(T_v)=\operatorname{tr}(T_{-v})=-\operatorname{tr}(T_v)$. Hence, $\operatorname{tr}(T_v)=0$ for all $v \in N_x^1M_+$ and all $x \in M_+$, which implies that $M_+$ is a minimal submanifold. 

In the setting of manifolds with boundary this is not true since some of the focal varieties could contain some component of the boundary which is a regular hypersurface of constant mean curvature but not necessarily minimal. Nonetheless, if the focal variety contains no component of the boundary, we can proceed exactly as in the case of closed manifolds. Hence, we can guarantee the following
\begin{Proposition}
    If all the components of the focal submanifold $V$ has codimension at leas 2, then $V$ is a minimal submanifold.
\end{Proposition}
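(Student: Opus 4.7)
The plan is to reduce the statement to the closed-manifold argument of Ge and Tang recalled just above. The key observation is that the codimension hypothesis forces $V$ to lie entirely in the interior of $M$: since $\partial M$ has codimension $1$ in $M$, a component of $V$ that coincided with a component of $\partial M$ would have codimension $1$, contradicting the hypothesis. By the earlier remark, any component of $V$ not contained in $\partial M$ is contained in $B_f$. In particular, for every $x\in V$ and every unit normal vector $v\in N_x^1V$, the geodesic $\eta_v(t)=\exp_x(tv)$ leaves $V$ and enters the interior of $M$ for small $t>0$, landing on a regular level set $M_t$.

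Next, I fix such $x\in V$ and $v\in N_x^1V$ and introduce Fermi coordinates $(x_1,\dots,x_n)$ centered at $x$ and adapted to $V$, chosen so that $\frac{\partial}{\partial x_n}\big|_{\eta_v(t)}$ coincides with the unit normal $\xi_N$ of $M_t$ along $\eta_v$. Because this coordinate system and the subsequent Taylor expansion of the shape operator $S_t$ of $M_t$ are purely local constructions carried out entirely in the interior of $M$, they are insensitive to the presence of $\partial M$ and go through exactly as in the closed case. In particular one obtains the block expansion of $\overline{S}_t$ recalled in the discussion above, yielding
\[
h_t \;=\; \operatorname{tr}(T_v)\;-\;\frac{n-m-1}{t}\;+\;t\bigl(\operatorname{tr} A+\operatorname{tr} D\bigr)\;+\;O(t^2),
\]
at the point $\eta_v(t)\in M_t$, where $m=\dim V$ and $T_v$ is the shape operator of $V$ in the direction $v$.

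By the proposition on constant mean curvature of the level sets, $h_t$ is constant along $M_t$, so for each small $t>0$ its value does not depend on the choice of $x\in V$ or of $v\in N_x^1V$. Applying the expansion both at $v$ and at $-v$ and comparing the $t$-independent terms gives
\[
\operatorname{tr}(T_v)\;=\;\operatorname{tr}(T_{-v})\;=\;-\operatorname{tr}(T_v),
\]
whence $\operatorname{tr}(T_v)=0$. Since $x\in V$ and $v\in N_x^1V$ were arbitrary, $V$ is a minimal submanifold.

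The main point requiring care — rather than a genuine obstacle — is the very first step: one has to verify that the codimension-at-least-$2$ hypothesis genuinely puts us in the interior situation, so that Wang's structure theorem (giving the tubular picture around $V$) and the Fermi-coordinate computation of Ge–Tang apply without modification. Once that reduction is in place, the remainder of the argument is a direct transcription of the closed-manifold proof.
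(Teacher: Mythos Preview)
Your proposal is correct and follows essentially the same approach as the paper. The paper's argument consists precisely of the observation that when the focal variety contains no component of $\partial M$ (which is what codimension at least $2$ guarantees, since boundary components have codimension $1$), the Ge--Tang computation can be carried out verbatim in the interior; you have simply spelled out this reduction in more detail.
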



\section{Proof of Theorem \ref{A}, Corollary \ref{Corolario1}, and Theorem \ref{Han-Li conjecture}}\label{Section3} 

 Let $f$ be an isoparametric function of $(M,g)$. We denote by $H^q_{1,f}(M)$ the completion of $S_f$  with respect  to  the Sobolev norm $$\|u \|_{1,q}=\big(\int_M|\nabla u|^qdv_g+\int_Mu^qdv_g\big)^{\frac{1}{q}}.$$

\subsection{Proof of Theorem \ref{A} and Corollary \ref{Corolario1}} $\,$ 

Theorem \ref{A} follows from  Lemma \ref{fSobolevembedding} which is an adaptation of Lemma 6.1 in \cite{Henry} to the setting of compact manifolds with boundary.

\begin{Lemma}\label{fSobolevembedding} Let $(M^n,g)$ be a compact Riemannian manifold with non-empty boundary  and let $f$ be an isoparametric function with $k(f)\geq 1$. Let $s\geq 1$, we have that
	\begin{enumerate}
		\item[i)] if $q\geq n-k(f)$, then the inclusion map of $H^q_{1,f}(M)$  into $L^s(M)$ and the trace map $tr:H^q_{1,f}(M)\longrightarrow L^s(\partial M)$  are both continuous and compact.
		\item[ii)] If $q< n-k(f)$,  the inclusion map of $H^q_{1,f}(M)$  into $L^s(M)$ is continuous if  \[s \leq \frac{q(n-k(f))}{n-k(f)-q},\] and it is compact if the  strict inequality holds. 
		\item[iii)]  If $q< n-k(f)$,  then $tr:H^q_{1,f}(M)\longrightarrow L^s(\partial M)$  is a continuous map if \[s \leq \frac{q(n-1-k(f))}{n-k(f)-q},\] and it is compact if the above inequality is strict.

\end{enumerate}

\end{Lemma}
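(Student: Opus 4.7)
The plan is to follow the strategy of Lemma 6.1 in \cite{Henry}, reducing the higher-dimensional Sobolev embedding of $H^q_{1,f}(M)$ to a weighted one-dimensional Sobolev inequality on $[t_-,t_+]$, and then extending it to the boundary trace. Any $u\in S_f$ has the form $u=\psi\circ f$ for some $\psi\colon[t_-,t_+]\to\R$, and since $|\nabla u|=|\psi'(f)|\sqrt{b\circ f}$, the coarea formula gives
\begin{equation*}
\int_M|u|^s\,dv_g=\int_{t_-}^{t_+}|\psi(t)|^s V(t)\,dt,\qquad \int_M|\nabla u|^q\,dv_g=\int_{t_-}^{t_+}|\psi'(t)|^q b(t)^{q/2}V(t)\,dt,
\end{equation*}
where $V(t):=\int_{M_t}|\nabla f|^{-1}d\sigma_t$. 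Because $f$ is locally constant on $\partial M$, every connected component $C_j$ of $\partial M$ lies inside a single level set $M_{t_j}$, whence $\int_{\partial M}|u|^s\,d\sigma=\sum_j|\psi(t_j)|^s\,\mathrm{Area}_g(C_j)$, a finite sum of point evaluations of $\psi$. These identities identify $H^q_{1,f}(M)$ with a weighted Sobolev space of one-variable functions on $[t_-,t_+]$.

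I would then analyse the weight near each focal value. Using the description of a regular level set as a tube over the focal submanifold together with the tube volume formula, $V(t)\sim c\,r^{n-d-1}$, where $d$ is the dimension of the focal submanifold in question and $r$ is the geodesic distance to it. Since $\sqrt{b}=|dt/dr|$ along the gradient flow, after the change of variables $t\mapsto r$ the two integral identities become
\begin{equation*}
\int_{0}^{L}|\psi|^s A(r)\,dr\quad\text{and}\quad \int_{0}^{L}|d\psi/dr|^q A(r)\,dr,
\end{equation*}
with $A(r)=V(t(r))\sqrt{b(t(r))}\sim r^{n-d-1}$ near the corresponding focal end. Since $k(f)$ is the smaller focal dimension, the end where $A(r)\sim r^{n-k(f)-1}$ is the most singular one and dictates the critical Sobolev exponent, exactly as in a radial problem on a ball in $\R^{n-k(f)}$.

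Having reduced matters to a radial, effectively $(n-k(f))$-dimensional problem on an interval, parts (i) and (ii) follow from the standard (weighted) Gagliardo--Nirenberg--Sobolev and Rellich--Kondrachov embeddings applied in that effective dimension: for $q\geq n-k(f)$ the weighted $W^{1,q}$ embeds continuously and compactly into every weighted $L^s$, while for $q<n-k(f)$ the critical exponent is $\tfrac{q(n-k(f))}{n-k(f)-q}$, with compactness in the strictly subcritical regime. For the trace, the point evaluations $|\psi(t_j)|$ are controlled by interpreting them as the effective-dimension trace on the hypersurfaces $\{r=r_j\}$ of the effective ball, which yields the stated exponent $\tfrac{q(n-k(f)-1)}{n-k(f)-q}$ when $q<n-k(f)$ and unrestricted control when $q\geq n-k(f)$; compactness follows from Rellich--Kondrachov together with the finiteness of the number of boundary components.

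The main technical obstacle is verifying the asymptotic $A(r)\sim c\,r^{n-d-1}$ when the focal submanifold sits inside $\partial M$: the computation of Wang and Ge--Tang recalled in Section~\ref{Isoparametric function} handles the interior case, but does not apply verbatim when the focal component is a codimension-one regular hypersurface of $M$ rather than a minimal submanifold of codimension at least two. In that regime the required polynomial decay of the weight follows from writing $f$ in geodesic normal coordinates to the boundary and reading the exponent directly off the smoothness of $b\circ f$ and of the tube metric, after which the one-dimensional weighted Sobolev theory closes the argument.
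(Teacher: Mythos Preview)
Your approach is correct but takes a genuinely different route from the paper's. The paper does not reduce to a one-variable weighted Sobolev problem via the coarea formula; instead it covers $M$ by Fermi-type coordinate charts $\varphi_i : W_i \to U_i \times V_i$ with $U_i \subset \R^{k(x_i)}$ and $V_i \subset \R^{n-k(x_i)}_+$, observes that any $u \in S_f$ restricted to $W_i$ depends only on the $V_i$-variable, and then applies the standard unweighted Euclidean Sobolev and trace embeddings (and Rellich--Kondrachov for compactness) on each $V_i$, patching the local estimates with a partition of unity. The paper's route has the advantage of landing directly on textbook statements for bounded domains in $\R^{n-k(x_i)}_+$, with no weighted analysis and no appeal to tube-volume asymptotics; your global coarea reduction is more geometric and in principle sharper, but it obliges you to supply (or cite) the weighted one-dimensional Sobolev and Rellich theorems for weights $A(r)\sim r^{n-k(f)-1}$, which is essentially the radial theory on $\R^{n-k(f)}$ and which you invoke without detail.

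Two remarks on your write-up. First, your ``main technical obstacle'' is misidentified: a focal component lying in $\partial M$ is necessarily a hypersurface, so there $d=n-1$ and $A(r)\sim r^0$ is bounded---this is the easy end, not the hard one; the genuinely singular behaviour of $A$ comes only from interior low-dimensional focal pieces, precisely the case already covered by Wang and Ge--Tang. Second, for the trace your method actually yields more than you claim: since each boundary component sits at a value $t_j$ where the weight $A$ is bounded away from zero, the one-dimensional embedding $W^{1,q}\hookrightarrow C^0$ near $t_j$ controls $|\psi(t_j)|$ for every $q\ge 1$, so the trace into $L^s(\partial M)$ is continuous and compact for all $s\ge 1$. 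Invoking the effective-dimension trace theorem is unnecessary and underestimates what your reduction delivers.
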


The argument to prove the lemma above is similar to the proof of Lemma 6.1 in \cite{Henry}.  For convenience of the reader we sketch the proof here. The key idea  is to make use of the geometry of the foliation induced by the isoparametric function $f$ in order to obtain an improvement of the Sobolev embedding theorems for functions that belong to $S_f$.  As one can check,  if $q<n$  and  $k(f)\geq 1$ we get either $n-k(f)-q\leq 0$ or $q(n-k(f))/(n-k(f)-q)>q^*:=qn/(n-q)$ and $q(n-1-k(f))/(n-k(f)-q)>q^{\partial,*}:=q(n-1)/(n-q)$.  Therefore, for  a sufficiently small positive $\varepsilon$, Lemma \ref{fSobolevembedding} says that $H^q_{1,f}(M)$ is compactly embedded at the same time into $L^{q^*+\varepsilon}(M)$ and $L^{q^{\partial,*}+\varepsilon}(\partial M)$.

\begin{proof}
	Let us denote with $\R^l_+$ the upper half-space of the Euclidean  space $\R^l$ endowed with   the  restriction of the Euclidean metric $g^l_e$.  For $x\in M$, we denote with $k(x)$ the dimension of the connected component of $M_{f(x)}$ where  $x$ is contained.
By the assumptions of the lemma, $k(x)\geq 1$ for any $x\in M$.	 Since the leafs  of the foliation induced by the isoparametric function, $M_t$,   are equidistant to  each other (see Section \ref{Isoparametric function}), given $x\in M$ we can use Fermi coordinates centered at $x$ to construct a coordinate system $(W_x,\varphi_x)$  such that 
\begin{itemize}
\item[(a)] $x\in W_x$.
\item[(b)]$\varphi_x(W_x)=U\times V$, where $U$ and $V$ are  open sets of $\R^{k(x)}$ and $\R^{n-k(x)}_{+}$, respectively.
\item[(c)]  $U\times \pi(\varphi_x(y))\subset \varphi_x(M_{f(y)}\cap W_x)$ for any $y\in W_x$, where $\pi$ is the projection of $\R^{k(x)}\times \R^{n-k(x)}_{+}$ onto $\R^{n-k(x)}_{+}$.
\item[(d)]   There exists a constant $a_x>0$  such that   $a_x^{-1}dv_{g^n_e}\leq dv_g\leq a_xdv_{g^n_e}$ in $W_x$.   
\item[(e)] If $x \in \partial M$ and $\left\{ \frac{\partial}{\partial \varphi_{x}^1}, \hdots, \frac{\partial}{\partial \varphi_{x}^n} \right\}$ is the basis of $T_xM$ induced by the coordinate system, then $\frac{\partial}{\varphi_{x}^n}$ points in the direction of $\eta$. 

\end{itemize}

Since $M$ is a compact manifold, $M$ admits a finite open cover by these charts.  Let  $\big\{(W_{i},\varphi_i)\big\}_{i=1}^N$ be one of these covers where $(W_{i},\varphi_i)$  is centered at $x_i$ and $\varphi_i(W_i)=U_i\times V_i$. Given a smooth function $u$ in $S_f$ we define $u_i:V_i\subset\R^{n-k(x_i)}_+\longrightarrow \R$ by
\[u_i(y)=u(\varphi_i^{-1}(x,y))\]
 where $x$ is any point in $U_i$. 
 These functions are well defined by  property (b). Indeed,  if $b=\pi(\varphi_i(y))$  and $(a,b) \in U_i \times \{b \}$, then $\varphi_i^{-1}(a,b) \in M_{f(y)} \cap W_i$ and $u|_{W_i}$ is constant along $M_{f(y)}\cap W_i$.  
 
 Since $dv_{g|_{\partial M}}= i^{*}( \iota_{\eta} dv_g )$, where $i: \partial M \hookrightarrow M$ is the inclusion map and $\iota_{\eta}$ is the interior multiplication,  we also have  that $(a_x)^{-1} dv_{g^{n-1}_e} \leq dv_{g|_{\partial M}} \leq a_x dv_{g^{n-1}_e}$. 
 Using  property (c) we are able to compare the $L^s-$norm of $u$ and $u_i$ and their gradients.  Indeed, there exist positive constants $A_i$, $B_i$ and $C_i$ (that depend only on $(W_i,\varphi_i)$) such that 
\begin{equation}\label{Lsineq1}
A_i\|u_i\|_{L^s(V_i)}\leq \|u\|_{L^s(W_i)}\leq B_i \|u_i\|_{L^s(V_i)},
\end{equation}
\begin{equation}\label{Lsineq2}
\|\nabla u_i\|_{L^s(V_i)}\leq C_i\|u\|_{L^s(W_i)}
\end{equation}
and
\begin{equation}\label{Lsineqboundary}
\|u|_{\partial M}\|_{L^s(\partial W_i)}\leq D_i\|u|_{\partial V_i}\|_{L^s(\partial V_i)}.
\end{equation}

Taking into account that $k(f)\leq k(x_i) $ for any $i$,  the assumptions i) and ii) imply that either $q\geq n-k(x_i)$ or $q<n-k(x_i)$ and $s\leq \frac{(n-k(x_i))q}{n-k(x_i)-q}$. Since $V_i\subset \R^{n-k(x_i)}_{+}$ is a bounded domain  that satisfies the cone condition  is well known  (see  for instance \cite{Adams}) that  the inclusion of $H^q_1(V_i)$ into $L^s(V_i)$ is a  continuous map.
Therefore, for some positive constant $E_i$ and $F_i$ we have that 
\begin{equation}\label{SobolevIneq}
\|u\|_{L^s(W_i)}\leq B_i \|u_i\|_{L^s(V_i)}\leq E_i \|u_i\|_{H^q_1(V_i)}\leq F_i \|u\|_{H^q_1(W_i)},
\end{equation}
where in the second and in the last inequality we used Inequalities \eqref{Lsineq1} and \eqref{Lsineq2}, respectively. Finally, we obtain
\[\|u\|_{L^s(M)}\leq \sum_{i=1}^{N}\|u\|_{L^s(W_i)}\leq \sum_{i=N}^rF_i\|u\|_{H^q_1(W_i)}\leq F\|u\|_{H^q_1(M)}\]
which says that the inclusion  of $H^q_{1,f}(M)$ into $L^s(M)$ is a  continuous map

Note that with these assumptions, we obtain that  $H^q_{r,f}(M)$ is continously included into $L^s(M)$ for any $r\geq 1$.

By the trace embedding theorem for Euclidean domains (see for instance \cite{Adams}), we get that
\[
||u_{|_{\partial M}}||_{L^s(\partial W_i)} \leq D_i ||{u_i}_{|_{\partial W_i}}||_{L^s(\partial V_i)} \leq H_i \left( ||\nabla u_i||_{L^q(V_i)}+||u_i||_{L^q(V_i)}    \right) \]
\[\leq J_i \left( ||\nabla u||_{L^q(W_i)}+||u||_{L^q(W_i)}  \right)
\]
whether $n-k(x_i) \leq q $ or $n-k(x_i) >  q$ and $1 \leq s \leq \frac{q(n-1-k(x_i))}{n-q-k(x_i)}$. 
 If $n-k(f) \leq q$ or $n-k(f)>q$ and $1 \leq s \leq \frac{q(n-1-k(f))}{n-q-k(f)}$ this holds for all $1 \leq i \leq m$, and we have that
\[
||u_{|_{\partial M}}||_{L^s(\partial M)} \leq \sum_{i=1}^{m}||u_{|_{\partial W_i}}||_{L^s(\partial W_i)} \leq J \sum_{i=1}^{m} \left(  ||\nabla u||_{L^q(W_i)}+ ||u||_{L^q(W_i)} \right) \]
\[\leq \tilde{J}\left(  ||\nabla u||_{L^q(M)}+||u||_{L^q(M)} \right)
\]
which proves the continuity of the trace map under the assumptions of iii).

Now we are going to prove the compactness of the inclusion map  of $H^q_{1,f}(M)$  into $L^s(M)$ when either
\begin{equation}\label{strict1}
	q\geq n-k(f)
	\end{equation}
	or 
	 \begin{equation}\label{strict2}
	 q<n-k(f)\ \mbox{and}\  s<\frac{q(n-k(f))}{n-k(f)-q}.  
	\end{equation}

In order to prove the compactness is enough to show that any bounded sequence $\{u_j\}$ of $H^q_{1,f}(M)$ has a convergent subsequence in $L^s(M)$.
Let us consider  a partition of the unity $\{\phi_i\}_{i=1}^N$ subordinated to the open cover $\{(W_i,\varphi_i)\}$ such that \[\phi_i(\varphi_i^{-1}(\tilde{x},y))= \phi_i(\varphi_i^{-1}(\bar{x},y))\] for any $\tilde{x}$ and  $\bar{x}$ $\in U_i$ and $y\in V_i$ .  For  $u\in S_f$ we define  the compact supported functions $u^i: V_i\longrightarrow \R$  as 
\[u^i(y):=\phi_i(\varphi_i^{-1}(y)).u_i(y). \]

Let $\{u_j\}_{j\in \N}$ be a bounded sequence in  $H^q_{1,f}(M)$, then by  \eqref{Lsineq1} and \eqref{Lsineq2} we see that the sequence
\[\{u^i_j\}_{j\in \N}\]
is a bounded sequence of $H_1^q(V_i)$.

Assume that either \eqref{strict1} or \eqref{strict2} holds. Then for any $1\leq i\leq N$, either $q\geq n-k(x_i)$ or $q<n-k(x_i)$. If the latter holds, it yields  $q<n-k(f)$ and one can check that $s<\frac{q(n-k(x_i))}{n-k(x_i)-q}$ holds as well. Any set $V_i$ fulfills the cone condition, therefore by the Rellich-Kondrachov Theorem (see Theorem 6.3, \cite{Adams}) there is a subsequence $\{u_{j_k}\}$ such that $\{u_{j_k}^i\}$ is a Cauchy sequence in $L^s(V_i)$. By Inequality \eqref{Lsineq1}  $\{\phi_i u_{j_k}\}$  is a Cauchy sequence in $W_i$ as well.  Since $\{\phi_i\}_{i=1}^r$ is a partition of the unity,  $\{u_{j_k}\}_{k\in \N}$ converges in $L^s(M)$.

The proof of the compactness of $tr:H^q_{1,f}(M)\longrightarrow L^s(\partial M)$ under assumption iii) is similar.    If $q \geq n-k(f)$ or $q < n-k(f)$ and $1 \leq s < \frac{q(n-1-k(f))}{n-k(f)-q}$, then the same inequalities hold for all $1 \leq i \leq N$ after replacing $k(f)$ for $k(x_i)$. Since $H_1^q(V_i) \hookrightarrow L^s(\partial V_i)$ is compact, then we have a subsequence $\{u_j\}$ such that $\{{(u_{j,i})}_{|_{\partial V_i}}\}$ is a Cauchy sequence in $L^s(\partial V_i)$ for all $1 \leq i \leq N$. By Property $(d)$ of the coordinate systems $\{(W_i, \varphi_i)\}$ and since $\partial M= \cup_{i=1}^N \partial W_i =\cup_{i=1}^N \partial \varphi_i^{-1}(U_i \times V_i)= \cup_{i=1}^{N}\varphi_i^{-1}(U_i \times \partial V_i) $, we have that $(\rho_i u_j)_{|_{\partial M}}$ is a Cauchy sequence in $L^{s}(\partial M)$ for all $1 \leq i \leq N$, and since $\{\rho_i\}_{i=1}^m$ is a partition of unity, we obtain that $\{u_j|_{\partial M}\}$ converges strongly in $L^s(\partial M)$.

\end{proof}

Let us consider the functionals $J_{g}^s:H^2_{1}(M)-\{0\}\longrightarrow \R$ and  $Q_{g}^s:H^2_{1}(M)-\{0\}\longrightarrow \R$ defined by 
\[J_{g,f}^s(u):=\frac{E(u)}{\Big(\int_Mu^{s}dv_g\Big)^{\frac{2}{s}}}\]
and
\[Q_{g}^s(u):=\frac{E(u)}{\Big(\int_{\partial M}u^{s}d\sigma\Big)^{\frac{2}{s}}}, \]
where \[E(u):=\int_Ma_n|\nabla u|^2_g+s_gu^2dv_g+2(n-1)\int_{\partial M}h_gu^2d\sigma.\]

Note that $J^{p_n}_g=J_g$ and $Q^{p_n^{\partial}}_g=Q_g$.

We are going to consider \[Y^s_f(M,\partial M,g):=\inf_{u\in H^2_{1,f}(M)-\{0\}}J_{g,f}^{s}(u),\]
\[\tilde{Y}^s(M,\partial M,g):=\inf_{u\in H^2_{1}(M)-\{0\}}Q_g^{s}(u)\]
and \[\tilde{Y}^s_f(M,\partial M,g):=\inf_{u\in H^2_{1,f}(M)-\{0\}}Q_g^{s}(u).\]
It follows from definitions  that $\tilde{Y}^{p_n^{\partial}}(M,\partial M,g)=\tilde{Y}(M,\partial M,g)$ and 
\begin{equation}\label{comparacionY_fY}
\tilde{Y}^s(M,\partial M,g)\leq \tilde{Y}^s_f(M,\partial M,g).
\end{equation}
It is well known that positive critical points of $J_{g,f}^s|_{H^2_{1,f}(M)-\{0\}}$ and  $Q_{g,f}^s|_{H^2_{1,f}(M)-\{0\}}$ satisfy Equation \eqref{ThmA} and Equation \eqref{ThmB} respectively (see for instance \cite{Escobar2} and \cite{Escobar1}). Hence to prove Theorem \ref{A} is sufficient to show that $Y^s_f(M,\partial M,[g])$ and $\tilde{Y}^s_f
(M,\partial M,[g])$ are attained by smooth positive functions.

\begin{proof}[Proof of Theorem \ref{A}]

Assume that either $(a)$ or (b) hold.  Let $\{u_i\}_{i\in \N}$ be a minimizing sequence of non-negative functions with  $\|u_i\|_{2,1}=1$.  Taking $q=2$ by Lemma \ref{fSobolevembedding} we have that  $H^2_{1,f}(M)$ is compactly embedded in $L^s(M)$. Therefore, there is subsequence of $\{u_i\}$ that converges to a nonzero function $u\in S_f$ that minimizes $Y^s_f(M,\partial M,g)$. Actually,  using standard elliptic regularity theory it can be seen that $u$ is  smooth. By the strong maximum principle, $u$ is positive in the interior of $M$. By the Hopf boundary point Lemma, $u$ is also positive along the boundary. This proves i).

Now let assume that $\tilde{Y}(M,\partial M,[g])>-\infty$.  We claim that $\tilde{Y}^s(M,\partial M,g)$ is finite for any $2\leq s\leq p_n^{\partial}$. Therefore,  Inequality \eqref{comparacionY_fY} implies that $\tilde{Y}^s_f(M,\partial M,g)$ is finite.

If $\tilde{Y}(M,\partial M,[g])\geq 0$, then $E(u)\geq 0$ for any $u\in H^2_1(M)$. Hence,  $\tilde{Y}^s_f(M,\partial M,g)\geq 0$ for any $2\leq s \leq p_n^{\partial}$. 

If $s'\leq s$, by H\"older inequality we have that 
\[\Big(\int_{\partial M}u^{s'}d\sigma\Big)^{\frac{2}{s'}}\leq\Big(\int_{\partial M}u^{s}d\sigma\Big)^{\frac{2}{s}} vol(\partial M)^{\frac{2(s-s')}{ss'}}.\]  
Therefore, if $E(u)<0$ and $s\geq 2$, $Q_g^{s}(u)\geq Q_g^{2}(u)
vol(\partial M)^{\frac{s-2}{s}}$. Hence 
\begin{equation}\label{ineqBeigenvalue}\tilde{Y}^s(M,\partial M,g)\geq \lambda_1^B(g)vol(\partial M)^{\frac{s-2}{s}}\end{equation}
whenever $\tilde{Y}(M,\partial M,[g])< 0$.
On the other hand, it is well known (see \cite{Escobar3}) that $\tilde{Y}(M,\partial M,[g])>-\infty$ implies that $\lambda_1^B(g)>-\infty$. Thus, \eqref{ineqBeigenvalue} implies that   $\tilde{Y}^s(M,\partial M,g)$ is finite when $\tilde{Y}(M,\partial M,[g])$ is finite and negative and this proves the claim.

Let assume that in addition either $(a)$ or $(c)$ hold as well.  Lemma \ref{fSobolevembedding} says that the embedding of $H^2_{1,f}(M)$ into $L^s(\partial M)$ is compact. Using a similar argument to the one used in the proof of item $i)$ we obtain that $\tilde{Y}^s_f(M,\partial M,[g])$ is attained by a positive smooth function that is constant along the level sets of $f$.

\end{proof}

Let $f$ be an isoparametric function of $(M,g)$ and let us consider the Riemannian metric $h=\phi g$   where $\phi=\psi\circ f$ with $\psi>0$.   
By straightforward computations we obtain that
\begin{equation*}\label{laplacianConfMetric}\Delta_hf=-\frac{(n-2)\psi'(f)}{2\phi^2}\|\nabla_gf\|^2_g+\frac{1}{\phi}\Delta_g(f) =-\frac{(n-2)\psi'(f)b(f)}{2\phi^2}+\frac{a(f)}{\phi},\end{equation*}
and 
\begin{equation*}\label{normgradientConfMetric}\|\nabla_h f\|_h^2=\frac{1}{\phi}\|\nabla_g f\|_g^2=\frac{b(f)}{\phi}.
\end{equation*}
Therefore, $f$ is an isoparametric function of $(M,h)$ as well, that is $h\in [g]_f$. Actually, using a similar argument as in the proof of Proposition 3.1 in \cite{Henry} it can be seen that any metric in $[g]_f$ is of this form. Hence, in order to prove Corollary \ref{Corolario1} is is enough  to show  that there exist $u_1$ and $u_2$ positive smooth functions of $S_f$ that are solutions of Equation \eqref{EYG} with $c_2=0$ and $c_1=0$ respectively.

\begin{proof}[Proof of Corollary \ref{Corolario1}]

	Let us assume that $k(f)<(n-2)$, otherwise we are done taking in Theorem \ref{A}  $s_1=p_n$ and $s_2=p_n^{\partial}$. Since $k(f)\geq 1$, note that  $2n(n-k(f)-2)<2(n-k(f))(n-2)$, then $p_n<2(n-k)/(n-k-2)$ and assumption $(b)$ of Theorem \ref{A}  is fulfilled for $s_1=p_n$. Therefore, there exists a positive smooth function $u_1\in S_f$ that is solution of Equation \eqref{ThmA} with $s_1=p_n$. Also note that  $2(n-1)(n-k(f)-2)<2(n-k(f)-1)(n-2)$. Thus,  assumption $(c)$ of Theorem \ref{A}  is fulfilled for $s_2=p_n^{\partial}$. Hence, there exists a  positive smooth function $u_2\in S_f$ that is solution of Equation \eqref{ThmB} with $s_2=p^{\partial}_n$.
\end{proof}

\begin{Remark} Let $(M,g)$ be compact Riemannian manifold with $\partial M\neq \emptyset$  and constant positive scalar curvature and let $(N,h)$ be any closed Riemannian manifold with constant positive scalar curvature.  If   $f$ is an isoparametric function of $(M,g)$, we mentioned in the Introduction that $f$ is an isoparametric function of $(M\times N,g+h)$ as well, but now all the level sets of $f$ have positive dimension. Therefore, by Corollary \ref{Corolario1} there exist $\mu_1$ and $\mu_2$ in $[g+h]_f$ such that  $\mu_1$ is of  constant scalar curvature  and $\partial M\times N$ is minimal  with respect to $\mu_1$ and $\mu_2$ is scalar flat and $\partial M\times N$ is a constant mean curvature hypersurface with respect to $\mu_2$.
	\end{Remark}

\begin{Proposition}\label{alosumouna} Let $(M,g)$ be a compact Riemannian manifold with a non-empty connected boundary that admits an isoparametric function $f$. There exists at most one unit volume scalar flat metric in $[g]_f$ such that the boundary has constant mean curvature.  
	\end{Proposition}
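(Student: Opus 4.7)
The plan is to show that the quotient of two unit--volume scalar flat representatives in $[g]_f$ must be identically one, via a harmonic function argument. Every metric $\tilde g\in [g]_f$ has the form $\tilde g=u^{p_n-2}g$ for some positive $u\in S_f$ (the parametrization noted just before the proof of Corollary~\ref{Corolario1}). By \eqref{curvaturaescalar} and \eqref{curvaturamedia}, such a metric is scalar flat with constant boundary mean curvature $c$ precisely when $L_g(u)=0$ on $M$ and $B_g(u)=c\,u^{p_n/2}$ on $\partial M$.

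Given two such pairs $(u_1,c_1)$ and $(u_2,c_2)$, I would set $v:=u_2/u_1$ so that $g_2=v^{p_n-2}g_1$ where $g_i:=u_i^{p_n-2}g$. The conformal covariance of $L$ and $B$, combined with $s_{g_1}=s_{g_2}=0$, collapses the equations for $v$ to
\begin{equation*}
\Delta_{g_1}v=0 \text{ in } M, \qquad \tfrac{2}{n-2}\tfrac{\partial v}{\partial \eta_1}+c_1 v=c_2 v^{p_n/2} \text{ on } \partial M,
\end{equation*}
so that $v$ becomes a positive harmonic function on $(M,g_1)$ with a nonlinear Robin-type boundary condition.

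The key observation is that $v\in S_f$ (a quotient of $S_f$-functions remains in $S_f$), and connectedness of $\partial M$ combined with the local constancy of $f$ on $\partial M$ forces $f$ to take a single value on $\partial M$; hence $v$ is constant on $\partial M$, say $v|_{\partial M}\equiv \mu>0$. The strong maximum principle applied to the positive harmonic function $v$ then yields $v\equiv \mu$ throughout $M$. Since $g_2=\mu^{p_n-2}g_1$, we obtain $\Vol(M,g_2)=\mu^{p_n}\Vol(M,g_1)$, and the unit volume normalization forces $\mu=1$; therefore $u_1=u_2$, and the Robin boundary equation then yields $c_1=c_2$.

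The only genuine subtlety in this plan, and the place where the connectedness hypothesis on $\partial M$ is essential, is the constancy of $v$ on $\partial M$; without it $v$ could take different values on distinct boundary components and the maximum principle alone would not produce a global constant. The other steps---the conformal transformation rules and the volume scaling under $g_2=\mu^{p_n-2}g_1$---are entirely routine.
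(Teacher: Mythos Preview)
Your proof is correct and follows essentially the same route as the paper: write one metric as $v^{p_n-2}$ times the other, use scalar flatness to obtain $\Delta_{g_1}v=0$, use $v\in S_f$ together with connectedness of $\partial M$ to force $v$ constant on $\partial M$, apply the maximum principle to conclude $v$ is globally constant, and then invoke the unit volume normalization to get $v\equiv 1$. The only cosmetic difference is that you pass through the auxiliary functions $u_1,u_2$ relative to $g$ before forming the quotient, whereas the paper writes $g_2=u^{p_n-2}g_1$ directly; your $v$ is exactly the paper's $u$.
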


\begin{proof} Let assume that there are two metrics $g_1$ and $g_2$ that satisfy the statement of the proposition. We can write $g_2=u^{p_n-2}g_1$ where $u$ is a positive smooth function that belongs to $S_f$. Then $u$ satisfies  
	\begin{equation*}\begin{cases}  \Delta_{g_1}u=0 & \mbox{on } M, \\ \frac{2}{n-2}\frac{\partial u}{\partial \eta}+h_{g_1}u=h_{g_2} u^{\frac{p_n}{2}} & \mbox{on } \partial M.\end{cases}
	\end{equation*}
	Since $\partial M$ is connected and $u\in S_f$,  $u$ is constant on $\partial M$. On the other hand $u$ is an harmonic function, then it must be constant on $M$. Let $u\equiv c$,  then $1=vol(g_2)=c^{p_n}$, hence $g_2=g_1$.
\end{proof}

By Corollary \ref{Corolario1}  and  Proposition \ref{alosumouna} we obtain:

\begin{Corollary}\label{uniqueness} Let $(M,g)$ with $\partial M$ connected and $f$ an isoparametric function.  If $k(f)\geq 1$ and $\tilde{Y}(M,\partial M, [g])$ is finite then there exists only one metric in $[g]_f$  with unit volume, zero scalar curvature and constant mean curvature on $\partial M$.  
\end{Corollary}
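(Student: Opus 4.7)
The plan is to combine the existence result of Corollary \ref{Corolario1} with the uniqueness provided by Proposition \ref{alosumouna}.

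First, since $k(f)\geq 1$ and $\tilde{Y}(M,\partial M,[g])$ is finite, Corollary \ref{Corolario1} produces a metric $h_2\in [g]_f$ which is scalar flat and has constant mean curvature on $\partial M$. To obtain unit volume, I would rescale $h_2$ by a suitable positive constant $\lambda$, setting $\tilde{h}_2 = \lambda h_2$. A constant rescaling is a conformal deformation by a constant function (hence trivially an element of $S_f$), so $\tilde{h}_2\in [g]_f$; moreover, the scalar curvature vanishes under rescaling and the mean curvature is rescaled by a constant factor, so it remains constant on $\partial M$. Choosing $\lambda = \operatorname{vol}(h_2)^{-2/n}$ normalizes the volume to $1$.

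For uniqueness, I would invoke Proposition \ref{alosumouna} directly: since $\partial M$ is connected by assumption, that proposition asserts that at most one unit volume metric in $[g]_f$ can be scalar flat with constant mean curvature on $\partial M$. Combined with the existence of $\tilde h_2$, this gives exactly one such metric, completing the argument.

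The main (and in fact only substantive) issue to check is that the constant rescaling used in the existence step does not take us out of $[g]_f$ and does not destroy the scalar flat / constant mean curvature conditions; this is immediate from the transformation laws, so I do not anticipate any real obstacle. The corollary is essentially a packaging of the two previous results.
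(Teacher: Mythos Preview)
Your proof is correct and follows exactly the paper's approach: the paper simply states that the corollary follows by combining Corollary \ref{Corolario1} (existence) with Proposition \ref{alosumouna} (uniqueness), without writing out any further details. Your added remark about the constant rescaling to normalize volume is a fine elaboration that the paper leaves implicit.
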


\subsection{Proof of Theorem \ref{Han-Li conjecture}} $\,$

When $Y(M,\partial M, [g])>0$ we can state Theorem \ref{A} in a more general fashion.

\begin{Proposition}\label{Han-liconjecture-1} Let $(M^n,g)$ be a compact manifold  $(n\geq 3)$ with boundary and $Y(M,\partial M, [g])>0$. Let $p,q\geq 1$ and $q<p$.  Let $f$ be an isoparametric function of $(M,g)$ and  assume that $s_{g}\in S_f$. Let us denote with $(a)$, $(b)$ and $(c)$ the following assumptions:
 	
 	\begin{itemize}
 		\item[(a)] $k(f)\geq n-2$.  
 		\item[(b)] $k(f)<(n-2)$ and $p<\frac{2(n-k(f))}{n-k(f)-2}$.
 		\item[(c)] $k(f)<(n-2)$ and $q<\frac{2(n-k(f)-1)}{n-k(f)-2}$.
 	\end{itemize}
  Either if $(a)$ holds or $(b)$ and $(c)$ hold, then for any $c_1>0$ and $c_2\in \R$ there exists  a positive smooth function $u \in S_f$ that is  a solution of    
	\begin{equation}\label{EqHan-Li-sub}\begin{cases}  a_n\Delta_{g}u+s_{g}u=c_1 u^{p-1} & \mbox{on } M, \\ \frac{2}{n-2}\frac{\partial u}{\partial \eta}+h_{g}u=c_2u^{q-1} & \mbox{on } \partial M. \end{cases}
\end{equation}
 \end{Proposition}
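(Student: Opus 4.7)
Plan: The strategy is variational minimization in the symmetric class $H^2_{1,f}(M)$, exploiting two ingredients: (i) the compact embeddings $H^2_{1,f}(M)\hookrightarrow L^p(M)$ and $H^2_{1,f}(M)\hookrightarrow L^q(\partial M)$ provided by Lemma \ref{fSobolevembedding} (strict subcriticality holds under $(a)$ or under the pair $(b)$ and $(c)$, in each case for both the interior and boundary exponents since $q<p$), and (ii) the coercivity of the quadratic form $E$ on $H^2_{1,f}(M)$, which follows from $Y(M,\partial M,[g])>0$ (e.g., after a conformal change within $[g]_f$ to a metric of positive constant scalar curvature and minimal boundary, via Corollary \ref{Corolario1}). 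The nonlinear boundary coefficient $c_2$ will be built into the functional, while $c_1>0$ will be recovered as a Lagrange multiplier.

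For each $\beta>0$, I consider
\[
m(\beta)\ :=\ \inf\left\{\, E(u)\ -\ \frac{4(n-1)c_2}{q}\int_{\partial M} u^q\, d\sigma\ :\ u\in H^2_{1,f}(M),\ u\geq 0,\ \textstyle\int_M u^p\, dv_g = \beta\,\right\}.
\]
Coercivity of $E$ and the subcritical trace inequality bound the functional below on the constraint. Any minimizing sequence is bounded in $H^2_1$, and by the compact embeddings passes (after extraction) to a strongly convergent subsequence, producing a non-negative minimizer $u_\beta\in S_f$. Elliptic regularity gives smoothness of $u_\beta$, and the strong maximum principle together with Hopf's boundary point lemma yields $u_\beta>0$ on $M$. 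By the principle of symmetric criticality (applied to the group action whose fixed-point set is $S_f$), $u_\beta$ is a critical point of the constrained problem on the ambient space $H^2_1(M)$, whence
\[
L_g u_\beta = c_1(\beta)\, u_\beta^{p-1}\ \text{in } M,\qquad B_g u_\beta = c_2\, u_\beta^{q-1}\ \text{on } \partial M,
\]
where the constant $A=4(n-1)c_2/q$ in the functional was chosen precisely so that the boundary Euler--Lagrange reads $B_g u_\beta = c_2 u_\beta^{q-1}$ and $c_1(\beta)$ is (a rescaling of) the Lagrange multiplier. Testing the interior equation against $u_\beta$ and using the boundary condition gives the explicit identity
\[
c_1(\beta)\,\beta\ =\ E(u_\beta)\ -\ 2(n-1)c_2\int_{\partial M} u_\beta^q\, d\sigma.
\]

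To conclude, I would show that $\beta\mapsto c_1(\beta)$ is continuous and sweeps all of $(0,\infty)$. Plugging in the competitor $u=\beta^{1/p} v_0$, for a fixed positive $v_0\in S_f$ with $\int_M v_0^p\, dv_g=1$, yields the scaling estimates that control $m(\beta)$ (and hence $c_1(\beta)$): in the superlinear regime one expects $c_1(\beta)\to+\infty$ as $\beta\to 0^+$ (from the coercive lower bound $E(u)\gtrsim \beta^{2/p}$ and subcritical $L^q(\partial M)$ control), and $c_1(\beta)\to 0^+$ as $\beta\to\infty$ (the $\beta^{(2-p)/p}$ and $\beta^{(q-p)/p}$ contributions vanish). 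Continuity of $c_1(\beta)$ then follows from the standard stability of minimizers under perturbation of the constraint, and the intermediate value theorem furnishes a $\beta^\ast>0$ with $c_1(\beta^\ast)=c_1$; the corresponding $u_{\beta^\ast}$ is the desired solution. The main obstacle will be making the asymptotic and continuity arguments uniform across all the admissible ranges of $p$ and $q$: in particular, handling the case $c_2<0$ (so that the boundary integral contributes with the ``wrong'' sign to the scaling estimates) and the sublinear regime $p\leq 2$ (where the naive scaling breaks down and the variational scheme above must be replaced or supplemented, for instance by a sub/supersolution or fixed-point argument) requires careful separate treatment.
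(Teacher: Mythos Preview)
Your overall strategy---variational minimisation in $H^2_{1,f}$ using the compact embeddings of Lemma~\ref{fSobolevembedding}, followed by regularity and the maximum principle---is the same as the paper's. The constraint structure, however, differs. The paper minimises $E$ over the Escobar-type set
\[
B^{a,b}_{p,q,f}=\Bigl\{u\in H^2_{1,f}(M):a\textstyle\int_M|u|^p\,dv_g+b\int_{\partial M}|u|^q\,d\sigma_g=1\Bigr\},
\]
with $a>0$ and $b\in\R$ as free parameters; the pair $(c_1,c_2)$ then comes out as an explicit function of $a,b$ and the minimiser, and the argument that one can hit any prescribed $(c_1,c_2)$ by varying $(a,b)$ is deferred to the proof of Theorem~\ref{Han-Li conjecture} (and carried out there only for the critical exponents). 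You instead hard-wire $c_2$ into the functional and vary the single interior constraint $\int_M u^p=\beta$, recovering $c_1$ as a Lagrange multiplier and sweeping via the intermediate value theorem. Your scheme is more direct for the purpose of prescribing the constants; the paper's joint constraint has the advantage that boundedness of minimising sequences follows immediately from Escobar's result (Proposition~2.4 in \cite{Escobar5}) without having to argue coercivity separately when $c_2<0$.

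One point needs correction. Your appeal to the principle of symmetric criticality ``applied to the group action whose fixed-point set is $S_f$'' is not justified: isoparametric foliations are not in general orbit foliations of an isometric action, so no such group need exist. The correct mechanism---implicit in the paper and underpinning the whole approach---is that the hypotheses $\|\nabla f\|^2=b\circ f$, $\Delta_g f=a\circ f$ and $s_g\in S_f$ force $L_g$ (and $B_g$, since $h_g$ is constant on each boundary component) to map $S_f$ to $S_f$. Consequently the Euler--Lagrange equation computed within $H^2_{1,f}$ already coincides with the full PDE, and no symmetric-criticality principle is needed. Your flagging of the regime $p\le2$ as requiring separate treatment is fair; the paper's proof of the proposition is equally silent on this point.
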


\begin{proof}
For $a>0$, let us define    
\[B^{a,b}_{p,q}:=\Big\{ u \in H^2_1(M) : a \int_M|u|^p dv_g+b  \int_{\partial M} |u|^{q}d\sigma_g =1 \Big\},\] 
and by $B^{a,b}_{p,q,f}$ we denote the set $B^{a,b}_{p,q}\cap S_f$. 
It is well known (see for instance \cite{Araujo}) that  positive critical points of the functional $E$ restricted to  $B^{a,b}_{p,q}$ are positive solution of Equation \eqref{EqHan-Li-sub} with 
\[c_1=\frac{apE(u)}{ap\int_{M}u^{p}dv_g+bq\int_{\partial M}u^qd\sigma_g} \]
and
\[c_2=\frac{bqE(u)}{2(n-1)\Big(ap\int_{M}u^{p}dv_g+bq\int_{\partial M}u^qd\sigma_g\Big)}.\]

Let \[Y^{a,b}_{p,q}(M,\partial M, [g]):= \inf_{B^{a,b}_{p,q}}  E(u)\]  and 
\[Y^{a,b}_{p,q;f}(M,\partial M, [g]):= \inf_{B^{a,b}_{p,q,f}} E(u).\]

Since $a>0$,  we have that $-\infty <Y^{a,b}_{p,q}(M,\partial M, [g])\leq Y^{a,b}_{p,q;f}(M,\partial M, [g])$.

Any positive minimizer of $Y^{a,b}_{p,q;f}(M, \partial M, [g])$ satisfies Equation \eqref{EqHan-Li-sub} for
\[c_1=\frac{apY^{a,b}_{p,q;f}(M,\partial M, [g])}{ap\int_{M}u^{p}dv_g+bq\int_{\partial M}u^qd\sigma_g} 
\]
and 
\[c_2=\frac{bqY^{a,b}_{p,q;f}(M,\partial M, [g])}{2(n-1)\Big(ap\int_{M}u^{p}dv_g+bq\int_{\partial M}u^qd\sigma_g\Big)}.\]

To see the existence of such minimizer, let $\{u_i\}\in B^{a,b}_{p,q,f}$ be a minimizing sequence of non-negative functions. It can be seen that any sequence of bounded energy is uniformly bounded in  $H^2_1(M)$ (see Proposition 2.4 in \cite{Escobar5}, therefore $\{u_i\}$ is a bounded in $H^2_{1,f}(M)$.   Either if $(a)$ holds, or $(b)$ and $(c)$ hold, we have that $H^2_{1,f}(M) \hookrightarrow L^p(M)$ and $H^2_{1,f}(M) \hookrightarrow L^q(\partial M)$ are both compact embeddings. Therefore, there is a subsequence $\{u_{i_k}\}$ that converges to a nonzero function $u \in S_f \cap B^{a,b}_{p,q}$ that minimizes  $Y^{a,b}_{p,q;f}(M, \partial M, [g])$. By standard regularity results, we have that $u$ is smooth. Since $u$ is nonzero, by the strong maximum principle, $u$ is positive in the interior of $M$ and by the Hopf boundary point Lemma, $u$ is also positive along the boundary.
\end{proof}


\begin{proof}[Proof of Theorem \ref{Han-Li conjecture}] For any $a>0$ and $b\in \R$, taking $p=p_n$ and $q=p^{\partial}_n$ it follows from Proposition \ref{Han-liconjecture-1} that exists a positive smooth minimizer $u_{ab}$ of  $Y^{a,b}_{p_n,p^{\partial}_n}(M,\partial M, [g])$. After a normalization we obtain a smooth positive solution $v_{a,b}$ of equation
\begin{equation}\begin{cases}  L_g(v_{ab})=v_{ab}^{p_n-1} & \mbox{on } M, \\ 
B_g(v_{ab})=c_{ab}v_{ab}^{p^{\partial}_n-1} & \mbox{on } \partial M \end{cases}
\end{equation}
where 
\[c_{a,b}=\frac{b}{\sqrt{2n(n-2)a}}\Big[\frac{Y^{a,b}_{p_n,p^{\partial}_n;f}(M,\partial M, [g])}{A(u_{ab})}\Big]^{\frac{1}{2}}.\]
with 
\[A(u_{ab})=ap_n\int_Mu^{p_n}_{ab}dv_g+bp^{\partial}_n\int_{\partial M}u^{p^{\partial}_n}_{ab}d
\sigma_g.\]

Note that 

\begin{equation}\begin{cases}  p_n^{\partial}\leq A(u_{ab}) \leq p_n  & \mbox{if}\ b\geq 0, \\  p_n\leq A(u_{ab}) & \mbox{if }\ b<0. \end{cases}
\end{equation}
Then we have that
\begin{equation}\label{cotainf cte }\frac{b}{2n\sqrt{a}}\Big[ Y^{a,b}_{p_n,p^{\partial}_n;f}(M,\partial M, [g])  \Big]^{\frac{1}{2}}\leq c_{a,b}.
\end{equation}
If $b>0$ we also have that 
\begin{equation}\label{cotasup cte}c_{ab}\leq \frac{b}{2\sqrt{n(n-1)a}}\Big[ Y^{a,b}_{p_n,p^{\partial}_n;f}(M,\partial M, [g])  \Big]^{\frac{1}{2}}.
 \end{equation}

For any $u\in C^{\infty}(M)$ there exists a unique $\lambda_{ab}>0$ such that $\lambda_{ab} u\in B^{a,b}_{p_n,p^{\partial}_n}$. Actually, $\lambda_{a,b}$ is the unique solution of $F_{ab}^u(t)=1$ where 
$F_{ab}^u:\R_{\geq 0}\longrightarrow \R$ is defined by
\[F_{ab}^u(t)=(a\int_Mu^{p_n}dv_g)t^{p_n}+(b\int_Mu^{p_n^{\partial}}d\sigma_g)t^{p_n^{\partial}}.\]
Note that $F_{ab}^u(0)=0$ and $F_{ab}^u$ is an increasing function if $b\geq 0$ and it has a unique  global minimum  if $b<0$.

If $a_2\geq a_1$ we have that
\begin{equation}\label{monoticity of Yab}
Y^{a_1,b}_{p_n,p^{\partial}_n;f}(M,\partial M, [g])\geq Y^{a_2,b}_{p_n,p^{\partial}_n;f}(M,\partial M, [g]).
\end{equation}
Indeed, for a positive $u\in C^{\infty}(M)$, let $\bar{u}=\lambda_{a_1,b}u$. Then, 
\[a_2\int_M\bar{u}^{p_n}dv_g+b\int_{\partial M}\bar{u}^{p^{\partial}_n}d\sigma_g=(a_2-a_1)\int_M\bar{u}^{p_n}dv_g+1\geq 1.\]
This implies that $\lambda_{a_2b}\leq 1$ for $\bar{u}$. Then for any  $u\in C^{\infty}(M)$ we have that  
\[E(\lambda_{a_2b}\bar{u})\leq E(\bar{u})\]
which implies Inequality \eqref{monoticity of Yab}. 

Similarly, if $b_2\geq b_1$, we have that \[Y^{a,b_1}_{p_n,p^{\partial}_n;f}(M,\partial M, [g])\geq Y^{a,b_2}_{p_n,p^{\partial}_n;f}(M,\partial M, [g]).\]

Let $a=1$ and $b=0$, then $Y^{1,0}_{p_n,p^{\partial}_n;f}(M,\partial M, [g])=Y_f(M,\partial M,g)$, and $c_{1,0}=0$.

By (Proposition 3.2, \cite{Escobar5}) we known that the function $b\longrightarrow Y^{a,b}_{p_n,p^{\partial}_n;f}(M,\partial M, [g])$  is continuous.

Let $b=1$, by  Inequality \eqref{cotainf cte } and by \eqref{monoticity of Yab}, we have that $\lim_{a\to 0^+} c_{ab}=+\infty$.

Let $a=1$, then \[\lim_{b\to 0^+} Y^{1,b}_{p_n,p^{\partial}_n;f}(M,\partial M, [g])=Y_f(M,\partial M,g).\]  
Therefore, by Inequality \eqref{cotasup cte} we have that $\lim_{b\to 0^+} c_{1b}=0$.

Let $b=-1$, it follows from \eqref{monoticity of Yab} that $\lim_{a\to 0+}c_{a,-1}=-\infty$
and $\lim_{a\to +\infty}c_{a,-1}=0^-$.

\end{proof} 

\section{Proof of  Theorem \ref{localbifurcation}}\label{Section4}

Let $(M,g)$ be a Riemannian manifold with boundary with constant scalar curvature and minimal boundary and let $f$ be an isoparametric function. Let us assume for simplicity that $\partial M$ is connected. Consider 
$$X:=A\times \R_{>0}$$ where $A$ is the space of positive functions with zero normal derivative at the boundary that also belong to $S_f$.  For $s>2$, let  $F^s:X\longrightarrow C^{2,\alpha}(M)\cap S_f$  be the map defined by 
\begin{equation}\label{bifurcationmap1}
F^s(u,\lambda):=\Delta_gu+\lambda\big(u-u^{s-1}).
\end{equation}
Note that $F^s(1,\lambda)=0$ for any $\lambda>0$. The points of the form $(1,\lambda)$ with $\lambda\in \R_{>0}$ are called  trivial zeroes of $F$ and $\{1\}\times \R_{>0}$ the axis of trivial zeroes.

The linearization of $F^s$ at a given  trivial zero $(1,\lambda_0)$ is 
$$D_uF^s(1,\lambda_0)(v)=\Delta_gv+\lambda_0(2-s)v.$$
A function  $v$ belongs to the kernel of $D_uF^s(1,\lambda_0)$ if and only if  $v\in S_f$ and satisfies the following Neumann boundary value problem
\begin{equation}\label{Linarizedproblem}\begin{cases}  \Delta_gv=\lambda_0(s-2)v & \mbox{on } M, \\ \frac{\partial v}{\partial \eta}=0, & \mbox{on } \partial M.  
\end{cases}
\end{equation}
That is,  $v\in S_f$ should be an eigenfunction with associate eigenvalue $\lambda_0(s-2)$.

It is well known  that the spectrum of the Neumann eigenvalue problem restricted to $S_f$ is a non-bounded  sequence   \begin{equation*}\label{spectrum}0=\mu_0^{f,\mathcal{N}}<\mu_{1}^{f,\mathcal{N}}\leq \mu_{2}^{f,\mathcal{N}}\leq \dots\leq \mu_{k}^{f,\mathcal{N}}\nearrow +\infty.
\end{equation*}

Let $E_{i}^{f,\mathcal{N}}$ denotes the space of eigenfunctions with associated eigenvalue $\mu_i^{f,\mathcal{N}}$.

Let $v=\varphi\circ f\in S_f$. By a straightforward computation we see that $v$ is a Neumann eigenfunction  with  associated eigenvalue  $\mu$  if and only if  $\varphi$ fulfills
\begin{equation*}\begin{cases}  -b(f)\varphi''(f)+a(f)\varphi'(f)=  \mu \varphi (f)& \mbox{on } M, \\ \varphi'(f)\frac{\partial f}{\partial \eta}=0 & \mbox{on } \partial M, \end{cases}
\end{equation*}
where $a$ and $b$ are the functions that appear in the Equations \eqref{Cgradient} and   \eqref{Claplacian}, respectively.
Therefore, $v\in E_{i}^{f,\mathcal{N}}$ if and only if  $\varphi$ satisfies 
\begin{equation}\label{eigenvalue}\begin{cases}  -b(t)\varphi''(t)+a(t)\varphi'(t)- \mu_i^{f,\mathcal{N} }\varphi (t)=0 &  t\in [t_-,t_+]-\{r\} , \\ \varphi'(r)=0 &  \end{cases}
\end{equation}
where $r$ is the value that $f$ takes on  $\partial M$. This implies that $\dim(E_{i}^{f,\mathcal{N}})=1$ for any $i$. We have the following proposition.

\begin{Proposition}\label{condKer=1} The dimension of the kernel of $D_uF^s(1,\frac{\mu_i^{f,\mathcal{N}} }{s-2})$  is 1.
	\end{Proposition}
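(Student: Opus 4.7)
My plan is to identify the kernel of $D_uF^s(1,\lambda_0)$ with the Neumann eigenspace $E_i^{f,\mathcal{N}}$ and then use the ODE reduction already set up in the paper to conclude one-dimensionality.

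First, from the explicit linearization $D_uF^s(1,\lambda_0)(v) = \Delta_g v + \lambda_0(2-s)v$, together with the domain constraint that $v$ belongs to $S_f$ and has vanishing normal derivative on $\partial M$, a function $v$ lies in $\ker D_uF^s(1,\lambda_0)$ precisely when it solves the Neumann eigenvalue problem \eqref{Linarizedproblem} with eigenvalue $\lambda_0(s-2)$. Setting $\lambda_0 = \mu_i^{f,\mathcal{N}}/(s-2)$ identifies this eigenvalue with $\mu_i^{f,\mathcal{N}}$, so $\ker D_uF^s(1,\lambda_0) = E_i^{f,\mathcal{N}}$ and the statement reduces to showing $\dim E_i^{f,\mathcal{N}} = 1$.

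Next, writing $v = \varphi \circ f$ and applying \eqref{Cgradient} and \eqref{Claplacian}, any element of $E_i^{f,\mathcal{N}}$ is determined by a function $\varphi$ satisfying the ODE \eqref{eigenvalue} on $[t_-,t_+]$ with Neumann condition $\varphi'(r)=0$. As a second-order linear ODE this has a two-dimensional solution space, so the one-dimensionality of $E_i^{f,\mathcal{N}}$ must come from one further condition extracted from the singular endpoints. At each critical value $t_\pm$ the coefficient $b$ has a Taylor expansion of the form $c(t-t_\pm)^2+o((t-t_\pm)^2)$ (the same expansion used earlier in the paper in the argument that $f(B_f)\subseteq\{t_-,t_+\}$), turning that endpoint into a regular singular point of the ODE. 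A Frobenius analysis splits the local solution space into a regular and a singular branch, and the requirement that $v = \varphi \circ f$ extend smoothly across the corresponding focal submanifold forces $\varphi$ to be the regular branch at that endpoint. Combining this regularity condition with the Neumann condition $\varphi'(r)=0$ (or with the analogous regularity condition at the opposite endpoint, in the case where $\partial M$ itself sits inside a focal submanifold and $r$ coincides with a critical value) leaves at most a one-parameter family of solutions, with the admissible $\mu_i^{f,\mathcal{N}}$ being precisely the discrete values for which such a nonzero solution exists.

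The main obstacle is the Frobenius/regularity analysis at the singular endpoints: one must verify rigorously that smoothness of $v = \varphi \circ f$ on $M$ at a focal submanifold corresponds exactly to selecting the non-logarithmic Frobenius branch of \eqref{eigenvalue} at the singular endpoint. Once this correspondence is in place, the dimension count for the resulting one-dimensional Sturm--Liouville problem is routine and yields $\dim\ker D_uF^s(1,\lambda_0) = 1$.
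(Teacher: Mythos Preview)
Your overall route is the same as the paper's: identify the kernel with the Neumann eigenspace $E_i^{f,\mathcal{N}}$ in $S_f$, reduce to the scalar second-order ODE \eqref{eigenvalue}, and read off one-dimensionality from there. But you take a detour that the paper does not, and you make one factual slip in doing so.

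The paper's argument is shorter than what you outline. Once every $v\in E_i^{f,\mathcal{N}}$ is represented by a $\varphi$ solving the second-order ODE together with $\varphi'(r)=0$, the bound $\dim E_i^{f,\mathcal{N}}\le 1$ follows immediately from ODE uniqueness at the point $r$ (where $b(r)\neq 0$ since $\partial M$ is a regular level set in this setting): the data $(\varphi(r),\varphi'(r))$ determine $\varphi$, and the Neumann condition fixes $\varphi'(r)=0$, leaving only the scalar $\varphi(r)$ free. No analysis of the singular endpoint is needed for the dimension bound; regularity of eigenfunctions there is automatic by elliptic regularity, and its only role is to select which $\mu$ are eigenvalues. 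So your claim that the one-dimensionality ``must come from one further condition extracted from the singular endpoints'' overstates what is required.

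Your Taylor expansion is also wrong. The quadratic behaviour $b(t)=c(t-t_0)^2+o((t-t_0)^2)$ that you cite from the paper is established for a \emph{hypothetical interior} critical value $t_0$, where $b$ attains an interior minimum and hence $b'(t_0)=0$; that passage is a proof by contradiction. At the genuine endpoints $t_\pm$ there is no reason for $b'$ to vanish, and in the model example $f(x)=x_{n+1}$ on a spherical cap one has $b(t)=1-t^2$, which vanishes \emph{linearly} at $t=\pm 1$. A Frobenius analysis with the correct linear leading term still yields one regular and one singular branch, so your approach can be salvaged, but given that the Neumann condition already delivers the bound, that machinery is unnecessary here.
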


Let $\mu_i^{f,\mathcal{N}}$ be an eigenvalue of the Neumann problem and let $v_i$ such that $E_{i}^{f,\mathcal{N}}=span(v_i)$. Since $D_uF^s(1,\frac{\mu_i^{f,\mathcal{N}} }{s-2})$ is a self-adjoint operator we see that  
\begin{equation}\label{Cond0}
\int_M  D_uF^s(1,\frac{\mu_i^{f,\mathcal{N}}}{s-2})(v)v_idv_g=0
\end{equation}
for any $v\in S_f$ with $\frac{\partial v}{\partial \eta}=0$. Hence,  
\begin{equation}\label{Cond2}
V:=Range\Big(D_{u}F^s(1,\frac{\mu_i^{f,\mathcal{N}}}{s-2})\Big)\ \mbox {has codimension}\ 1.
\end{equation}

One can check  that 
\begin{equation}\label{Cond3}
D_{\lambda}F^s(1,\frac{\mu_i^{f,\mathcal{N}}}{s-2})=0
\end{equation}
 and
 \begin{equation}\label{Cond4} 
 D_{\lambda \lambda}F^s(1,\frac{\mu_i^{f,\mathcal{N}}}{s-2})=0.
 \end{equation}  
Also, using that 
$\int_M D_{u\lambda}(1,\frac{\mu_i^{f,\mathcal{N}}}{s-2})(v_i) v_idv_g \neq 0$ and Equation \eqref{Cond0} we obtain that 
\begin{equation}\label{Cond5}D_{u\lambda}F^s(1,\frac{\mu_i^{f,\mathcal{N}}}{s-2})(v_i)\notin V.
\end{equation}

Since Proposition \ref{condKer=1},  \eqref{Cond2}, \eqref{Cond3}, \eqref{Cond4} and \eqref{Cond5} we can use the  Crandall-Rabinowitz's local bifurcation theorem   
 (see for instance \cite{Crandall} or \cite{Nirenberg}) to conclude that a trivial zero $(1,\lambda)$ is a bifurcation point of $F^s$ if and only if  
\begin{equation}\label{CONDBIFURCATIONPOINT}
\lambda=\frac{\mu_i^{f,\mathcal{N}}}{s-2}.
\end{equation}

This means that there is a curve $\alpha$ in $X$ that cuts transversally  the  axis of trivial zeroes at $(1,\frac{\mu_i^{f,\mathcal{N}}}{s-2})$. More precisely, for $r$ small enough $\alpha$ is parametrized by 
\begin{equation}\label{curvabifurcacion}\alpha(r)=\Big(rv_i+O(r^2), \beta(r)\Big)
\end{equation}
with  $\beta(0)=\frac{\mu_i^{f,\mathcal{N}}}{s-2}$.

Let us recall the assumptions of Theorem  \ref{localbifurcation}. For $t>0$,  $(M\times N,g(t)=g+th)$ is the Riemannian product where  $(M^m,g)$  is a compact Riemannian manifold ($m\geq 2$) with boundary, constant scalar curvature and minimal boundary and $(N,h)$ is any closed Riemannian manifold with positive constant scalar curvature.

\begin{proof}[Proof of Theorem \ref{localbifurcation}] 
	 We normalize Equation \eqref{EYM} by taking $c=s_{g(t)}$. Then a positive solution $u$ of Equation \eqref{EYM} that depends only on $M$ satisfies 
	 \begin{equation}\label{THMBIFEQ}\begin{cases} \Delta_{g}u+\lambda(t)u=\lambda(t) u^{p_{m+n}-1} & \mbox{on } M, \\ \frac{\partial u}{\partial \eta}=0 & \mbox{on } \partial M, \end{cases}
	 \end{equation}
	 where 
	 \begin{equation*}
	 \label{parametro} \lambda(t)=\frac{s_{g}+t^{-1}s_{h}}{a_{m+n}}
	 \end{equation*}
	 Recall that $u^{p_{m+n}-2}g(t)$ is a metric of constant scalar curvature and minimal boundary. Note that since the scalar curvature of $g(t)$ is constant and $h_{g(t)}=0$ on $\partial(M\times N)$ 
	 $u\equiv 1$ is a trivial solution of Equation \eqref{THMBIFEQ}  for any $t>0$.
	 
	  Let consider the map $F^{p_{m+n}}$. By \eqref{CONDBIFURCATIONPOINT} we get  that $(1,\lambda)$ is a bifurcation point of $F^{p_{m+n}}$ if and only if  
	 \[\lambda=\frac{(m+n-2)\mu_i^{f,\mathcal{N}}}{4}.\]
	Therefore,  if  \[t_i=\frac{s_h}{\mu_i^{f,\mathcal{N}}(m+n-1)-s_g},\]
	whenever $\mu_i^{f,\mathcal{N}}(m+n-1)\neq s_g$,  $(1,\lambda(t_i))$ is a bifurcation point. Note that since  $\{\mu_i^{f,\mathcal{N}}\}_{i\in \N}$ is not bounded from above the sequence $\{t_i\}$  tends to 0 as $i$ goes to $+\infty$.
By the existence of the curve $\alpha$ (see \eqref{curvabifurcacion})   we know that given $\varepsilon_i>0$ there exists $\gamma_i$ such that $|t_i-\gamma_i|<\varepsilon_i$  such that there exists a positive smooth solution $u_{\gamma_i}\in S_f$ of Equation \eqref{THMBIFEQ} with $\lambda(\gamma_i)$.

 \end{proof}

\end{document}